\newlength\ubwidth
\newtheorem{theorem}{Theorem}
\newtheorem{lemma}[theorem]{Lemma}
\newtheorem{cor}[theorem]{Corollary}
\newtheorem{defn}[theorem]{Definition}
\begin{document}

\title[Mod $d$ extensions of parity in overpartitions]
  {Modulo $d$ extension of parity results in  
  Rogers-Ramanujan-Gordon type overpartition identities}

\author[Kur\c{s}ung\"{o}z]{Ka\u{g}an Kur\c{s}ung\"{o}z}
\address{Ka\u{g}an Kur\c{s}ung\"{o}z, 
  Faculty of Engineering and Natural Sciences, 
  Sabanc{\i} University, Tuzla, Istanbul 34956, Turkey}
\email{kursungoz@sabanciuniv.edu}

\author[Zadehdabbagh]{Mohammad Zadehdabbagh}
\address{Mohammad Zadehdabbagh, 
  Faculty of Engineering and Natural Sciences, 
  Sabanc{\i} University, Tuzla, Istanbul 34956, Turkey}
\email{mzadehdabbagh@sabanciuniv.edu}

\subjclass[2010]{Primary 05A15, 05A17, 11P84, Secondary 05A19}

\keywords{Integer Partition, Rogers-Ramanujan-Gordon Identities}

\keywords{integer partition, overpartition, partition generating function, 
  Rogers-Ramanujan type partition identity}

\date{October 2022}

\begin{abstract}
\noindent 
Sang, Shi and Yee, in 2020, found overpartition analogs of 
Andrews' results involving parity in Rogers-Ramanujan-Gordon identities.  
Their result partially answered an open question of Andrews'.  
The open question was to involve parity in overpartition identities.  
We extend Sang, Shi, and Yee's work to arbitrary moduli, 
and also provide a missing case in their identities.  
We also unify proofs of Rogers-Ramanujan-Gordon identities 
for overpartitions due to Lovejoy and Chen et.al.; 
Sang, Shi, and Yee's results; and ours.  
Although verification type proofs are given for brevity, 
a construction of series as solutions of functional equations 
between partition generating functions is sketched.  
\end{abstract}

\maketitle

\section{Introduction and Statement of Results}
\label{secIntro}

Rogers-Ramanujan identities~\cite{RR, Rogers, Schur}
and their generalizations 
(\cite{Andrews-parity, Bressoud-RRG-AllModuli, ChenEtAl-RRG-overptn, RRG, 
KK-AndrewsStyle, Lovejoy-RRG-overptn, SSY-parity-overptn}, to exemplify a few)
constitute a significant part of all partition identities.  

\begin{theorem}[The first Rogers-Ramanujan identity~\cite{RR, Rogers, Schur}]
\label{thmRR1} 
  Let $n$ be a non-negative integer.  
  The number of partitions of $n$ into distinct and non-consecutive parts 
  equals the number of partitions of $n$ into parts 
  that are congruent to 1 or 4 modulo 5.  
\end{theorem}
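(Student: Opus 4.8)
The plan is to pass to generating functions and reduce the statement to the analytic form of the first Rogers-Ramanujan identity. Throughout, write $(a;q)_n=\prod_{j=0}^{n-1}(1-aq^{j})$ and $(a;q)_\infty=\prod_{j\ge 0}(1-aq^{j})$. First I would compute the generating function for partitions into distinct, non-consecutive parts: a partition with exactly $n$ such parts has its parts bounded below, in increasing order, by the staircase $1,3,\dots,2n-1$, so subtracting this minimal configuration (which contributes $q^{1+3+\cdots+(2n-1)}=q^{n^2}$) leaves an arbitrary partition into at most $n$ parts, generated by $1/(q;q)_n$. Summing over $n\ge 0$ gives $\sum_{n\ge 0}q^{n^2}/(q;q)_n$. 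On the other side, the generating function for partitions into parts congruent to $1$ or $4$ modulo $5$ is immediately $1/\bigl((q;q^5)_\infty(q^4;q^5)_\infty\bigr)$. Thus Theorem~\ref{thmRR1} is equivalent to the analytic identity $\sum_{n\ge 0}q^{n^2}/(q;q)_n=1/\bigl((q;q^5)_\infty(q^4;q^5)_\infty\bigr)$.

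To prove this identity I would introduce the one-parameter refinement
\[
  G(x)=\sum_{n\ge 0}\frac{x^{n}q^{n^2}}{(q;q)_n},
\]
so that $G(0)=1$ and the quantity sought is $G(1)$. Using $(1-q^{n})/(q;q)_n=1/(q;q)_{n-1}$ and reindexing, one checks that $G(x)-G(xq)=xq\,G(xq^{2})$, that is, $G$ satisfies the $q$-difference equation
\[
  G(x)=G(xq)+xq\,G(xq^{2}),\qquad G(0)=1.
\]
Comparing coefficients of $x^{m}$ turns this into the recurrence $g_m(1-q^m)=g_{m-1}q^{2m-1}$, which determines every coefficient of $G$, so the functional equation together with the initial value pins $G$ down uniquely as a formal power series in $x$.

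The decisive step is to solve this $q$-difference equation in closed form. I would renormalize by setting $D(x)=(xq;q)_\infty\,G(x)$, rewrite the recurrence in terms of $D$, and iterate it; the terms then collapse onto exponents that are the pentagonal-type numbers $n(5n\pm 1)/2$. The precise target of this computation is the intermediate identity $(q;q)_\infty\,G(1)=\sum_{n\in\mathbb{Z}}(-1)^{n}q^{n(5n+1)/2}$. Once it is established, the Jacobi triple product identity evaluates the bilateral sum as $(q^{2};q^{5})_\infty(q^{3};q^{5})_\infty(q^{5};q^{5})_\infty$; dividing by $(q;q)_\infty=\prod_{r=1}^{5}(q^{r};q^{5})_\infty$ cancels every factor except $(q;q^{5})_\infty^{-1}(q^{4};q^{5})_\infty^{-1}$, which is exactly the right-hand generating function, and the identity follows.

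I expect the main obstacle to be precisely this middle step: extracting the closed theta form from the $q$-difference equation, i.e.\ proving that the iterated coefficients telescope onto the exponents $n(5n\pm 1)/2$. This is the technical and historically nontrivial heart of the argument. By contrast, the staircase subtraction that produces $G$, the derivation of its functional equation, and the final Jacobi-triple-product simplification are all routine. A purely bijective proof is also possible but is considerably harder to make explicit, and the $q$-difference-equation route is the one most in keeping with the functional-equation philosophy adopted later in this paper.
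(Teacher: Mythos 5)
The paper itself never proves Theorem~\ref{thmRR1}; it is quoted as classical background with citations to Rogers, Ramanujan, and Schur, so there is no internal proof to compare against and your outline must stand on its own. It does not: there is a genuine gap, and it sits exactly where you placed it. The two ends of your argument are correct and routine --- the staircase bijection giving $\sum_{n\ge 0}q^{n^2}/(q;q)_n$, the product side, the $q$-difference equation $G(x)=G(xq)+xq\,G(xq^{2})$ with $G(0)=1$ (which indeed determines $G$ uniquely as a power series), and the final Jacobi-triple-product reduction from your target identity $(q;q)_\infty G(1)=\sum_{n\in\mathbb{Z}}(-1)^{n}q^{n(5n+1)/2}$. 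But the middle step is not a ``computation'' whose outcome you may assume: the claim that renormalizing by $D(x)=(xq;q)_\infty G(x)$ and iterating makes the terms ``collapse onto'' the exponents $n(5n\pm 1)/2$ is false as a description of any straightforward iteration or telescoping. The one-variable functional equation gives no direct access to the evaluation at $x=1$; there is no infinite-product solution of it for general $x$, which is precisely why the identity resisted proof for decades. Every successful proof starting from your setup injects additional structure at this exact point: Schur's finite refinement $d_n=d_{n-1}+q^{n}d_{n-2}$ resolved into Gaussian binomial coefficients with pentagonal shifts and proved by induction before letting $n\to\infty$; or Rogers' auxiliary (Rogers--Selberg) functions; or Watson's $q$-analogue of Whipple's theorem; or Bailey pairs. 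As written, you have reduced the theorem to an unproven assertion essentially equivalent to the theorem itself, and your own closing paragraph concedes as much.

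If you want to close the gap in the spirit of this paper, the missing ingredient is an explicit candidate solution to be \emph{verified}, which is how the authors prove their main result (claimed $q$-hypergeometric solutions checked against the functional equations and initial conditions, then uniqueness via the defining $q$-difference equations principle). For your $G$, the candidate is the $k=a=2$ case of the Selberg--Andrews series $Q_{k,a}(x;q)$ displayed in the paper's introduction: one verifies by elementary manipulation that $Q_{2,2}(x;q)$ satisfies $Q(x)=Q(xq)+xq\,Q(xq^{2})$ and $Q(0)=1$, concludes $Q_{2,2}(x;q)=G(x)$ by uniqueness, and then sets $x=1$, where the two halves of $Q_{2,2}(1;q)$ combine into the bilateral sum $\frac{1}{(q;q)_\infty}\sum_{n\in\mathbb{Z}}(-1)^{n}q^{n(5n+1)/2}$; your Jacobi-triple-product step then finishes the proof. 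The point is that the verification route requires writing down that ansatz in advance --- exactly the object your sketch lacks.
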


A partition of a non-negative integer is a non-increasing sum of positive integers.  
The only partition of zero is agreed to be the empty partition~\cite{TheBlueBook}.  

For example, 9 has five partitions into distinct and non-consecutive parts: 
\begin{align}
\nonumber 
  9, \; 8 + 1, \; 7 + 2, \; 6 + 3, \; 5 + 3 + 1; 
\end{align}
as well as five partitions into parts 
that are congruent to 1 or 4 modulo 5: 
\begin{align}
\nonumber 
  9, \; 6 + 1 + 1 + 1, \; 4 + 4 + 1, \; 4 + 1 + \cdots + 1, \; 1 + \cdots + 1.  
\end{align}
The former condition is called a \emph{multiplicity condition} 
as it puts restrictions on how many times a part can appear, 
or a \emph{gap condition} as gaps are imposed between successive parts in a partition.  
The latter condition is called a \emph{congruence condition} 
for obvious reasons.  

The subsequent results are more compactly presented using the \emph{frequency notation}.  
For arbitrary but fixed partition $\lambda$ and positive integer $i$, 
$f_i$ is the number of times $i$ appears in $\lambda$~\cite{TheBlueBook}.  
For example, Taking $\lambda$ as $4+4+1$, $f_1 = 1$, $f_2 = f_3 = 0$, $f_4 = 2$ and $f_i = 0$ for $i \geq 5$.  

\begin{theorem}[Rogers-Ramanujan-Gordon identities~\cite{RRG}]
\label{thmRRG}
  Let $k$ and $a$ be positive integers satisfying $k \geq 2$ and $1 \leq a \leq k$.  
  Given any non-negative integer $n$, 
  let $A_{k, a}(n)$ be the number of partitions of $n$ into parts 
  that are not congruent to $0, \pm a \pmod{2k+1}$, 
  and $B_{k, a}(n)$ be the number of partitions of $n$ in which 
  $f_1 < a$ and $f_i + f_{i+1} < k$ for any $i$.  
  Then, $A_{k, a}(n) = B_{k, a}(n)$.  
\end{theorem}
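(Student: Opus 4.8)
The plan is to prove the identity at the level of generating functions, reducing both counts to one and the same infinite product. The $A$-side is immediate: the congruence condition factors over residue classes, so
\begin{equation*}
\sum_{n\ge 0} A_{k,a}(n)\, q^n = \prod_{\substack{m\ge 1\\ m\not\equiv 0,\pm a \pmod{2k+1}}} \frac{1}{1-q^m}.
\end{equation*}
All the work therefore lies on the $B$-side, where the gap conditions $f_1 < a$ and $f_i + f_{i+1} < k$ must be encoded analytically. To this end I would refine $B_{k,a}$ by recording the number of parts, setting
\begin{equation*}
G_{k,a}(x;q) = \sum_{\lambda} x^{\ell(\lambda)} q^{|\lambda|},
\end{equation*}
the sum ranging over all partitions $\lambda$ with $f_1(\lambda) < a$ and $f_i(\lambda)+f_{i+1}(\lambda) < k$, where $\ell(\lambda)$ is the number of parts; specializing $x = 1$ recovers $\sum_n B_{k,a}(n)\, q^n$.

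The central step is a functional equation obtained by \emph{peeling off the ones}. Classifying a partition by its value $f_1 = i \in \{0,\dots,a-1\}$, deleting the $i$ ones, and subtracting $1$ from every remaining part produces a partition $\mu$ with $f_j(\mu)+f_{j+1}(\mu) < k$ and $f_1(\mu) < k-i$, that is, a partition counted by $G_{k,k-i}$. Tracking weights (the ones contribute $(xq)^{i}$ and each remaining part is lengthened by $1$) gives
\begin{equation*}
G_{k,a}(x;q) = \sum_{i=0}^{a-1} (xq)^{i}\, G_{k,k-i}(xq;q),
\end{equation*}
and subtracting the $a$ and $a-1$ instances yields the compact recurrence
\begin{equation*}
G_{k,a}(x;q) - G_{k,a-1}(x;q) = (xq)^{a-1}\, G_{k,k-a+1}(xq;q),
\end{equation*}
valid for $1 \le a \le k$ with the boundary convention $G_{k,0} \equiv 0$.

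Next I would solve this system. Since each application of the substitution $x \mapsto xq$ strictly raises the $q$-degree of every nonconstant term while fixing the constant term, the recurrence together with the normalization $G_{k,a}(0;q) = 1$ determines all coefficients uniquely by induction on the $q$-degree. Writing $(q)_n = \prod_{i=1}^{n}(1-q^i)$ for the $q$-Pochhammer symbol, I would then exhibit the Andrews--Gordon multiple series
\begin{equation*}
\sum_{n_1,\dots,n_{k-1}\ge 0} \frac{x^{N_a+\cdots+N_{k-1}}\, q^{N_1^2+\cdots+N_{k-1}^2 + N_a + \cdots + N_{k-1}}}{(q)_{n_1}\cdots (q)_{n_{k-1}}}, \qquad N_i = n_i + \cdots + n_{k-1},
\end{equation*}
and verify that it satisfies the same recurrence and the same normalization, so that it coincides with $G_{k,a}(x;q)$.

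Finally I would set $x = 1$. Iterating the functional equation in the manner of Andrews expresses $G_{k,a}(1;q)$ as $(q)_\infty^{-1}$ times a bilateral theta-type sum, which Jacobi's triple product identity collapses to $\prod_{j\ge 1}(1-q^{(2k+1)j})(1-q^{(2k+1)j-a})(1-q^{(2k+1)j-(2k+1)+a})$, namely $\prod_{m\equiv 0,\pm a \pmod{2k+1}}(1-q^m)$; dividing by $(q)_\infty$ returns exactly the $A$-side product, so comparing coefficients of $q^n$ gives $A_{k,a}(n)=B_{k,a}(n)$. I expect the main obstacle to be the verification that the Andrews--Gordon series obeys the peeling recurrence: this requires splitting off the innermost summation variable and reindexing the $(k-1)$-fold sum so that the shift $x \mapsto xq$ and the index change $a \mapsto k-a+1$ align, and it is precisely here that the interplay between the quadratic exponents $N_i^2$ and the linear exponents $N_a+\cdots+N_{k-1}$ must be handled carefully. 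The uniqueness argument and the closing triple product evaluation are comparatively routine once the recurrence is in place.
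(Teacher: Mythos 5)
Your skeleton---refining $B_{k,a}$ by the number of parts, deriving the peeling recurrence $G_{k,a}(x;q)-G_{k,a-1}(x;q)=(xq)^{a-1}G_{k,k-a+1}(xq;q)$ with $G_{k,0}\equiv 0$, pinning the solution down by the defining $q$-difference equations principle, and finishing with Jacobi's triple product at $x=1$---is exactly the classical Andrews argument that this paper invokes for Theorem~\ref{thmRRG} (the theorem is cited rather than reproved, but that scheme is sketched in Section~\ref{secIntro} via the series $Q_{k,a}(x;q)$, and it is the very method used in Section~\ref{secProofs} for the overpartition analogues). Your combinatorial derivation of the recurrence and the boundary data $G_{k,a}(0;q)=1$, $G_{k,0}\equiv 0$ are correct.

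The gap is in the candidate solution. The multiple series you exhibit carries the wrong power of $x$: the exponent recording the number of parts must be $N_1+\cdots+N_{k-1}$, not $N_a+\cdots+N_{k-1}$ (only the \emph{linear exponent of $q$} starts at $N_a$). As written, your candidate violates your own normalization: setting $x=0$ forces only $n_a=\cdots=n_{k-1}=0$ and leaves
\begin{equation*}
\sum_{n_1,\ldots,n_{a-1}\ge 0}\frac{q^{N_1^2+\cdots+N_{a-1}^2}}{(q)_{n_1}\cdots (q)_{n_{a-1}}}\neq 1 \qquad (a\ge 2),
\end{equation*}
and in the case $k=a=2$ your series is $\sum_{n\ge 0}q^{n^2}/(q)_n$, which contains no $x$ at all and so cannot be the unique power series matching $G_{2,2}(x;q)$. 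Moreover, even with the corrected exponent, the step you call the ``main obstacle''---verifying that the multiseries satisfies the peeling recurrence---is not a careful reindexing but essentially the hard content of the Andrews--Gordon identities, normally handled by Bailey-chain machinery or a substantial combinatorial argument. The candidate that makes your plan close, and the one the paper points to, is Andrews' alternating series $Q_{k,a}(x;q)$ displayed in Section~\ref{secIntro}: one checks on inspection that $Q_{k,a}(0;q)=1$ and $Q_{k,0}(x;q)\equiv 0$, the recurrence follows by elementary telescoping of the summands, and at $x=1$ the identity $(q)_n\,(q^{n+1};q)_\infty=(q)_\infty$ exhibits $Q_{k,a}(1;q)$ as $(q)_\infty^{-1}$ times a bilateral theta sum, to which Jacobi's triple product applies. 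Note that this is also the only way your final step actually arises: iterating the recurrence does not, by itself, produce a bilateral theta sum from the multiple series. With $Q_{k,a}(x;q)$ in place of your multiseries, the proof is complete along the lines you describe.
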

$(k, a) = (2,2)$ and $=(2,1)$ are the first and the second Rogers-Ramanujan identities, 
respectively.  

The series 
\begin{align}
\nonumber 
  \sum_{n \geq 0} A_{k, a}(n) q^n, \quad 
  \textrm{ or } \quad 
  \sum_{n \geq 0} B_{k, a}(n) q^n \quad 
\end{align}
are called partition generating functions~\cite{TheBlueBook}.  
The study of partition identities goes hand in hand with $q$-series.  
The technique that is most often used in proving a Rogers-Ramanujan generalization 
involves a variant of the series 
\begin{align}
\nonumber
  Q_{k, a}(x; q) = \sum_{n \geq 0}
  \frac{ (-1)^n x^{kn} q^{ (2k+1)\binom{n+1}{2} - an} }{ (q; q)_n (xq^{n+1}; q)_\infty }
  - \frac{ (-1)^n x^{kn + a} q^{ (2k+1)\binom{n+1}{2} + a(n+1)} }
    { (q; q)_n (xq^{n+1}; q)_\infty }
\end{align}
due to Andrews~\cite{Andrews-RRG-analytic}.  
The origin of these series is in Selberg's work~\cite{Selberg}.  
For instance, Andrews~\cite{Andrews-RRG-analytic, Andrews-posets-RRG} showed that 
\begin{align}
\nonumber 
  \sum_{m, n \geq 0} b_{k, a}(m, n) x^m q^n = Q_{k, a}(x; q), 
\end{align}
where $b_{k, a}(m, n)$ are the number of partitions 
counted by $B_{k, a}(n)$ having $m$ parts.  
It is a simple matter to show that~\cite{TheBlueBook}
\begin{align}
\nonumber 
  \sum_{n \geq 0} A_{k, a}(n)q^n 
  = \prod_{ \substack{ n \geq 1 \\ n \not\equiv 0, \pm a \!\!\!\! \pmod{2k+1}} } 
    \frac{ 1 }{ (1 - q^n) }
  = \frac{ (q^a, q^{2k+1-a}, q^{2k+1}; q^{2k+1}) }{ (q; q)_\infty }.  
\end{align}
Here and elsewhere~\cite{GR}, 
\begin{align}
\nonumber 
  (a; q)_n = \prod_{j = 1}^n (1 - aq^{j-1}), \quad 
  (a_1, \ldots, a_m; q)_n = (a_1; q)_n \cdots (a_m; q)_n, \quad 
  (a; q)_\infty = \lim_{n \to \infty} (a; q)_n, 
\end{align}
and any series and infinite products in this paper converge absolutely for $\vert q \vert < 1$.  
When the base is $q = q^1$, we may abbreviate further as $(a)_n = (a; q)_n$.  

By a Rogers-Ramanujan generalization 
we mean a partition identity with a congruence condition on the right-hand side 
and a gap condition pertaining to only two consecutive parts on the left-hand side.  
Schur's partition identity~\cite{Schur} 
is not a Rogers-Ramanujan generalization in this sense, 
because the gap conditions are $f_i + f_{i+1} + f_{i+2} < 2$ and $f_{3i} + f_{3i+3} < 2$, 
involving parts that are up to three apart.  
We must impress that this is not a widely adopted classification.  
Many fellow researchers consider Schur's partition identity 
as a Rogers-Ramanujan type identity, 
because it relates gap conditions to congruence conditions.  

After Corteel and Lovejoy defined overpartitions~\cite{CL-overptns}, 
Lovejoy~\cite{Lovejoy-RRG-overptn} and Chen et.al.~\cite{ChenEtAl-RRG-overptn} 
demonstrated Gordon's theorem for overpartitions.  
An overpartition is a partition in which the first occurrence of any part may be overlined.  
For instance, 
\begin{align}
\nonumber 
  4 + 4 + 1, \quad
  \overline{4} + 4 + 1, \quad
  4 + 4 + \overline{1}, \quad
  \textrm{ and }
  \overline{4} + 4 + \overline{1}
\end{align}
are all of the overpartitions with underlying partition $4+4+1$.  
Frequency notation can be extended to overpartitions.  
For arbitrary but fixed overpartition $\lambda$ and positive integer $i$, 
$f_i$ is the number of occurrences of the non-overlined $i$'s in $\lambda$, 
and $f_{\overline{i}}$ is the number of occurrences of the overlined $i$ in $\lambda$.  
The definition implies that $f_{\overline{i}}$ can only be 0 or 1.  
For example, taking $\lambda$ to be $4 + 4 + \overline{1}$, 
$f_i = $ 2 if $i = 4$ and 0 otherwise, 
and $f_{\overline{i}} = $ 1 if $i = 1$ and 0 otherwise.  

\begin{theorem}[Gordon's Theorem for 
overpartitions~\cite{ChenEtAl-RRG-overptn, Lovejoy-RRG-overptn}]
\label{thmRRG-overptn}
  Let $k$ and $a$ be positive integers satisfying $k \geq 2$ and $1 \leq a \leq k$.  
  Given any non-negative integer $n$, 
  let $C_{k, a}(n)$ be the number of overpartitions of $n$ such that 
  $f_1 < a$ and $f_i + f_{\overline{i}} + f_{i+1} < k$ for all $i$.  
  For any $n$ and $1 \leq a < k$, 
  let $D_{k, a}(n)$ be the number of overpartitions of $n$ such that 
  the non-overlined parts are not congruent to 0 or $\pm a \pmod{2k}$, 
  and $D_{k,k}(n)$ be the number of overpartitions of $n$ such that 
  no parts are divisible by $k$.  
  Then, $C_{k, a}(n) = D_{k, a}(n)$.  
\end{theorem}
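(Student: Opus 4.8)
The plan is to prove Gordon's Theorem for overpartitions (Theorem~\ref{thmRRG-overptn}) by constructing and identifying the two-variable generating function for the overpartitions counted by $C_{k,a}(n)$, exactly paralleling the classical approach that produced $Q_{k,a}(x;q)$ for the ordinary case. The central object will be a generating function $\overline{Q}_{k,a}(x;q)$ that tracks the number of (non-overlined) parts by the exponent of $x$ and the partition size by the exponent of $q$. The end goal is to show that setting $x=1$ recovers the product side $\sum_n D_{k,a}(n)q^n$, which by the overpartition analog of the standard product manipulation should factor as an infinite product encoding the congruence conditions modulo $2k$.

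**First I would** set up the combinatorial generating function. Let $\overline{c}_{k,a}(m,n)$ denote the number of overpartitions counted by $C_{k,a}(n)$ that have exactly $m$ non-overlined parts, and define
\begin{align}
\nonumber
  \overline{Q}_{k,a}(x;q) = \sum_{m,n \geq 0} \overline{c}_{k,a}(m,n)\, x^m q^n.
\end{align}
The gap condition $f_i + f_{\overline{i}} + f_{i+1} < k$ links overlined and non-overlined occurrences, and each part size $i$ may carry at most one overlined copy (since $f_{\overline{i}} \in \{0,1\}$). The strategy is to derive a functional equation in $x$ for $\overline{Q}_{k,a}$ by analyzing what happens to the smallest part: peeling off the parts equal to $1$ (both overlined and non-overlined) and the constraint they impose on the parts equal to $2$ should relate $\overline{Q}_{k,a}(x;q)$ to $\overline{Q}_{k,a-1}(xq;q)$ and similar shifted objects, giving a recurrence in $a$ together with a boundary relation. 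Solving this recurrence — most naturally by iterating and summing the resulting series — should yield a closed-form expression of Andrews--Gordon type, with the key new feature being a factor such as $(-xq;q)_\infty$ or $(-q;q)$-type product accounting for the optional overlining of each distinct part size.

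**The hard part will be** the bookkeeping of the overlines inside the functional-equation step: because an overlined part $\overline{i}$ and a non-overlined part $i$ both contribute to the same gap constraint $f_i + f_{\overline{i}} + f_{i+1} < k$ but are governed by different multiplicity rules ($f_{\overline{i}} \leq 1$ while $f_i$ is unbounded below the gap threshold), the clean Durfee-square or successive-Durfee-dissection argument used in the ordinary case must be adapted so that the extra binomial-type weight from the overlines is tracked correctly. I expect this to be where the factor modifying the classical $Q_{k,a}$ series arises, and where the modulus shifts from $2k+1$ in Theorem~\ref{thmRRG} to $2k$ here.

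**Finally I would** verify the identification $\overline{Q}_{k,a}(1;q) = \sum_{n\geq 0} D_{k,a}(n) q^n$ by a direct product computation. Setting $x=1$ collapses the tracking of the number of parts, and the resulting series should telescope via a Jacobi-triple-product or limiting-$q$-series argument into
\begin{align}
\nonumber
  \frac{(-q;q)_\infty\,(q^a, q^{2k-a}, q^{2k}; q^{2k})_\infty}{(q;q)_\infty},
\end{align}
whose combinatorial reading is precisely: the factor $(-q;q)_\infty/(q;q)_\infty$ generates overpartitions with no congruence restriction, and the triple product in base $q^{2k}$ removes the non-overlined parts congruent to $0, \pm a \pmod{2k}$, matching the definition of $D_{k,a}(n)$ for $1 \leq a < k$; the case $a=k$ degenerates (since $a \equiv -a \pmod{2k}$ fails to separate) and must be handled separately to recover the "no part divisible by $k$" description of $D_{k,k}(n)$. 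Since the excerpt indicates that verification-type proofs are preferred for brevity, an alternative and perhaps cleaner route is to bypass the full construction: guess the closed form of $\overline{Q}_{k,a}(x;q)$ from the analogy with $Q_{k,a}$, verify that it satisfies the governing functional equation and the correct initial conditions (which pins it down uniquely among absolutely convergent series), and then read off the $x=1$ specialization — this is the approach I would actually carry out in the interest of economy.
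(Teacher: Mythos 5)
Your proposal is correct and follows essentially the same route as the paper: a two-variable generating function refined by number of parts, functional equations obtained by peeling off the smallest parts, the uniqueness (``defining $q$-difference equations'') principle combined with verification of a guessed closed form, and finally the $x=1$ specialization evaluated by Euler's identity and the Jacobi triple product. The only difference is that the paper carries this out for its general modulo-$d$ theorem (Theorem~\ref{thmOverPtnModd}) and obtains Theorem~\ref{thmRRG-overptn} as the $d=e=1$ special case, whereas you argue the $d=e=1$ case directly; your target product $(-q;q)_\infty(q^a,q^{2k-a},q^{2k};q^{2k})_\infty/(q;q)_\infty$ agrees with the paper's via $(-q;q)_\infty = 1/(q;q^2)_\infty$.
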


In 2010, Andrews studied parity in partition identities~\cite{Andrews-parity}.  
He gave a few Rogers-Ramanujan generalizations involving parity, 
one of which is the following.  
\begin{theorem}[{~\cite[Theorem~1]{Andrews-parity}}]
\label{thmRRG-GEA-parity}
  Suppose $k$, $a$ are integers satisfying $1 \leq k \leq a$ and $k \equiv a \pmod{2}$.  
  Let $\mathcal{W}_{k, a}(n)$ be the number of partitions 
  enumerated by $B_{k, a}(n)$ with the added restriction that 
  \emph{even} numbers appear an \emph{even} number of times.  
  If $k$ and $a$ are both even, let $G_{k, a}(n)$ be the number of partitions of $n$ 
  in which no odd part is repeated 
  and no even part is congruent to 0, $\pm a \pmod{2k+2}$.  
  If $k$ and $a$ are both odd, let $G_{k, a}(n)$ be the number of partitions of $n$ 
  into parts that are neither congruent to 2 $\pmod{4}$ 
  nor congruent to 0, $\pm a \pmod{2k+2}$.  
  Then, for all $n$, $\mathcal{W}_{k, a}(n) = G_{k, a}(n)$.  
\end{theorem}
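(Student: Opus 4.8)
The plan is to reduce Theorem~\ref{thmRRG-GEA-parity} to an identity between $q$-series and to prove that identity by the functional-equation method sketched in the abstract. The product side is harmless: reading off the congruence conditions directly,
\begin{align}
\nonumber
  \sum_{n\geq 0} G_{k,a}(n) q^n
  &= (-q; q^2)_\infty
  \prod_{ \substack{ n \geq 2,\ n \text{ even} \\ n \not\equiv 0, \pm a \!\!\!\! \pmod{2k+2} } }
  \frac{1}{1-q^n}
  \qquad (k,a \text{ both even}), \\
\nonumber
  \sum_{n\geq 0} G_{k,a}(n) q^n
  &= \prod_{ \substack{ n \geq 1,\ n \not\equiv 2 \!\!\!\! \pmod 4 \\ n \not\equiv 0, \pm a \!\!\!\! \pmod{2k+2} } }
  \frac{1}{1-q^n}
  \qquad (k,a \text{ both odd}).
\end{align}
Hence the entire task is to evaluate the generating function for $\mathcal{W}_{k,a}(n)$ and to recognise one of these two products at the end.

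For the gap side I would introduce a two-variable refinement
\begin{align}
\nonumber
  W_{k,a}(x;q) = \sum_{m,n \geq 0} w_{k,a}(m,n)\, x^m q^n,
\end{align}
where $w_{k,a}(m,n)$ counts the partitions enumerated by $\mathcal{W}_{k,a}(n)$ that have exactly $m$ parts, in complete analogy with Andrews' series $Q_{k,a}(x;q)$ for $B_{k,a}$. The standard way to produce a recurrence for such a function is to remove the parts equal to $1$ and then shrink every remaining part by $1$. The difference $W_{k,a}(x;q) - W_{k,a-1}(x;q)$ isolates the partitions with $f_1 = a-1$, which forces $f_2 \leq k-a$ through the gap condition $f_1 + f_2 < k$. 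The essential new feature is that shrinking each part by $1$ reverses the parity of every part and so destroys the restriction that \emph{even} parts occur an even number of times. To preserve that restriction one must instead peel off the parts equal to $1$ \emph{and} the parts equal to $2$ simultaneously and shrink the rest by $2$, enumerating the admissible pairs $(f_1,f_2)$ subject to $f_1 \leq a-1$, $f_1+f_2 < k$, $f_2+f_3 < k$, and $f_2$ even. This replaces Andrews' single-step recurrence by a two-step one, which is exactly the mechanism that turns the modulus $2k+1$ of Theorem~\ref{thmRRG} into $2k+2$ and that splits the answer into two cases according to residues mod $4$.

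Carrying this out gives a functional equation expressing $W_{k,a}(x;q)$ through functions $W_{k,\cdot}(xq^2;q)$ with $x$- and $q$-power prefactors recording the removed parts, together with the boundary data $W_{k,0}(x;q)=0$ and a normalisation at $a=k$. I would then solve this system to obtain an explicit Andrews--Gordon type multisum, specialise $x=1$, and apply the Jacobi triple product identity to collapse the alternating numerator into the products displayed above, the parities of $k$ and $a$ deciding which of the two product shapes emerges and thereby matching $G_{k,a}$. For brevity the write-up can be organised as a verification: exhibit the candidate multisum, check directly that $\sum_n \mathcal{W}_{k,a}(n)q^n$ and the multisum obey the same two-step recurrence and boundary conditions (so that they coincide), and separately transform the multisum into the product. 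The main obstacle is the bookkeeping in the two-step peel: because removing the parts $1$ and $2$ couples the three frequencies $f_1,f_2,f_3$ at once while simultaneously enforcing that $f_2$ be even, the resulting recurrence is genuinely more intricate than in the unrefined Rogers--Ramanujan--Gordon case, and pinning down its index shifts and prefactors exactly right, so that the subsequent Jacobi triple product yields precisely the modulus $2k+2$ and the correct residue exclusions, is where the real work lies.
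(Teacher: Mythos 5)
You should be aware at the outset that the paper does not prove Theorem \ref{thmRRG-GEA-parity} at all: it is quoted from Andrews' parity paper as background, and the machinery the paper actually develops (Lemmas \ref{lemmaUbarAdjustment}--\ref{lemmaUUbarClaimedSolns}) is aimed at its overpartition analogue, Theorem \ref{thmOverPtnModd}. So the fair comparison is with Andrews' original argument and with the paper's Section \ref{secProofs} template, and your outline does mirror both: a two-variable refinement $W_{k,a}(x;q)$, functional equations with the shift $x \mapsto xq^2$ (you correctly identify that the usual shift by $q$ is unusable here, since subtracting $1$ from every part swaps odd and even parts and destroys the parity restriction), uniqueness via the defining $q$-difference equations principle, an explicit multisum solution, and Jacobi's triple product at $x=1$. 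Your product-side generating functions for $G_{k,a}$ in both parity cases are also correct. (Incidentally, the hypothesis as printed should read $1 \leq a \leq k$ rather than $1 \leq k \leq a$; your own argument tacitly assumes $a \leq k$, e.g.\ in the bound $f_2 \leq k-a$.)

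The genuine gap is that everything constituting the proof is deferred. You never write the functional equation; carrying out your peeling argument actually gives
\begin{align}
\nonumber
  W_{k,a}(x;q)
  = \sum_{\substack{j_2 \geq 0 \\ j_2 \text{ even}}}\;
    \sum_{j_1=0}^{\min(a-1,\, k-1-j_2)}
    x^{j_1+j_2} q^{j_1+2j_2}\, W_{k,\,k-j_2}(xq^2;q),
\end{align}
a sum over all admissible pairs $(j_1,j_2)$ in which the new bound parameter is $k-j_2$ (so the evenness of $j_2$ keeps that parameter in the residue class of $k$ modulo $2$, consistent with the hypothesis $a \equiv k \pmod{2}$), not a two-term recurrence in $a$ as your narrative suggests. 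More importantly, you never exhibit the candidate multisum, never verify it against this equation and the initial conditions, and never perform the $x=1$ triple-product computation; in particular you give no mechanism by which the two shapes of the product side (the factor $(-q;q^2)_\infty$ when $k,a$ are both even, versus the exclusion of parts $\equiv 2 \pmod{4}$ when $k,a$ are both odd) would emerge from the specialization. Those steps are the substance of Andrews' proof, and they are exactly the analogues of \eqref{defAlphaf}--\eqref{defAlphaBetaBar} and Lemma \ref{lemmaUUbarClaimedSolns} in the paper's own framework for the overpartition case. A plan that names the hard part and stops there---as you concede in your last sentence---is a correct strategy, but not yet a proof.
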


One of the open problems Andrews listed at the end of~\cite{Andrews-parity}
was the extension of the results therein to overpartitions.  
Sang, Shi and Yee found a Rogers-Ramanujan generalization for overpartitions 
involving parity conditions~\cite{SSY-parity-overptn}.  
Their gap conditions are central in this note.  

\begin{defn}[{~\cite[Definitions 1.10, 1.11]{SSY-parity-overptn}}]
\label{defParityOverptn}
  For integers $k$ and $a$ satisfying $1 \leq a \leq k$, 
  let $U_{k, a}(n)$ be the number of overpartitions of $n$ satisfying 
  \begin{itemize}
  \item $f_1 \leq a-1 + f_{\overline{1}}$, 
  \item $f_{2l-1} \geq f_{\overline{2l-1}}$, 
  \item $f_{2l} + f_{\overline{2l}} \equiv 0 \pmod{2}$, 
  \item $f_l + f_{\overline{l}} + f_{l+1} \leq k - 1 + f_{\overline{l+1}}$; 
  \end{itemize}
  and let $\overline{U}_{k,a}(n)$ be the number of overpartitions of $n$ satisfying
  \begin{itemize}
  \item $f_1 \leq a-1 + f_{\overline{1}}$, 
  \item $f_{2l} \geq f_{\overline{2l}}$, 
  \item $f_{2l-1} + f_{\overline{2l-1}} \equiv 0 \pmod{2}$, 
  \item $f_l + f_{\overline{l}} + f_{l+1} \leq k - 1 + f_{\overline{l+1}}$.  
  \end{itemize}
\end{defn}

\begin{theorem}[{~\cite[Theorems 1.12, 1.13]{SSY-parity-overptn}}]
\label{thmSangShiYee}
  \begin{align}
  \nonumber 
    \sum_{n \geq 0} U_{2k, 2a}(n) q^n 
    = & \frac{ (-q; q)_\infty ( q^{2a}, q^{4k-2a}, q^{4k}; q^{4k} )_\infty }
      { (q^2; q^2)_\infty } 
  \end{align}
  \begin{align}
  \nonumber
    \sum_{n \geq 0} U_{2k, 2a-1}(n) q^n 
    = & \frac{1}{(1+q)} \frac{ (-q; q)_\infty ( q^{2a}, q^{4k-2a}, q^{4k}; q^{4k} )_\infty }
      { (q^2; q^2)_\infty } \\
  \nonumber 
    & + \frac{q}{(1+q)} \frac{ (-q; q)_\infty ( q^{2a-2}, q^{4k-2a+2}, q^{4k}; q^{4k} )_\infty }
      { (q^2; q^2)_\infty } 
  \end{align}
  \begin{align}
  \nonumber 
    \sum_{n \geq 0} \overline{U}_{2k, 2a-1}(n) q^n 
    = \sum_{n \geq 0} \overline{U}_{2k, 2a}(n) q^n 
    = & \frac{ (-q^2; q^2)^2_\infty ( q^{2a}, q^{4k-2a}, q^{4k}; q^{4k} )_\infty }
      { (q^2; q^2)_\infty } 
  \end{align}
\end{theorem}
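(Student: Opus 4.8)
The plan is to prove these three generating-function identities by the "verification type" method announced in the abstract: construct a two-variable refinement that tracks the number of parts (or some finer statistic) by a variable $x$, derive a functional equation in $x$ that this refinement must satisfy because of the gap/parity conditions in Definition~\ref{defParityOverptn}, and then exhibit an explicit series solution of Andrews--Selberg type that satisfies both the functional equation and the correct initial condition. Setting $x = 1$ in the solution and simplifying the resulting $q$-series into products via Jacobi's triple product then yields the stated right-hand sides. Concretely, I would introduce generating functions $D_{k,a}(x;q) = \sum_{m,n} u_{k,a}(m,n)\, x^m q^n$, where $u_{k,a}(m,n)$ counts the overpartitions enumerated by $U_{k,a}(n)$ (respectively $\overline{U}_{k,a}(n)$) that have exactly $m$ parts, and read off from the four conditions how appending or removing the smallest parts transforms one generating function into another.

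First I would isolate the combinatorial recursion. The conditions $f_1 \leq a-1+f_{\overline 1}$ and $f_l + f_{\overline l} + f_{l+1} \leq k-1+f_{\overline{l+1}}$ are exactly the overpartition analogues of the Rogers--Ramanujan--Gordon gap conditions already handled in Theorem~\ref{thmRRG-overptn}, so the backbone of the functional equation should match the one Andrews used to produce $Q_{k,a}(x;q)$, adapted to base $q^2$ because of the parity constraints. The new ingredients are the parity conditions $f_{2l}+f_{\overline{2l}} \equiv 0 \pmod 2$ together with $f_{2l-1} \ge f_{\overline{2l-1}}$ (and the swapped roles in the $\overline{U}$ case). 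I would encode these by splitting the parts into odd and even residue classes: the even parts, forced to appear an even number of times (counting the overline), contribute a factor built on $q^2$, while the odd parts with their one-sided overline constraint contribute the $(-q;q)_\infty$-type factors that account for the free overlining. This is where the asymmetry between $U$ and $\overline U$ enters, and it explains structurally why $\overline{U}_{2k,2a-1}$ and $\overline{U}_{2k,2a}$ coincide while the two $U$ cases differ by the $\tfrac{1}{1+q}$ and $\tfrac{q}{1+q}$ weighting.

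For the middle identity, the appearance of two product terms with shift $a \mapsto a-1$ indicates that the odd modulus $2a-1$ case is not self-contained: I would prove it by relating $U_{2k,2a-1}$ to the two even-index quantities $U_{2k,2a}$ and $U_{2k,2a-2}$ via the condition $f_1 \le a-1+f_{\overline 1}$, tracking whether the extra unit of freedom in $f_1$ is used. Summing the two cases with the Boolean weights $\tfrac{1}{1+q}$ and $\tfrac{q}{1+q}$ (which average over whether a $1$ or $\overline 1$ is or is not present) reproduces the stated decomposition, so this identity follows formally once the even cases are established. The main obstacle I expect is the bookkeeping in the functional equation: correctly translating the four simultaneous conditions --- especially the coupling between the parity condition on a part and the gap condition linking it to its neighbour of opposite parity --- into a single clean recursion in $x$, and then verifying that the candidate Selberg-type series (the $q^2$-analogue of $Q_{k,a}$) satisfies it term by term. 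Once the functional equation and the closed-form solution are matched, specializing $x=1$ and invoking the Jacobi triple product to collapse the alternating sum into the factors $(q^{2a},q^{4k-2a},q^{4k};q^{4k})_\infty$ is routine.
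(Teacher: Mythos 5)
Your outline --- refine the enumerants by the number of parts, derive functional equations from a smallest-part analysis, verify an explicit Andrews--Selberg type series against those equations and the initial conditions, then set $x=1$ and finish with Jacobi's triple product --- is exactly the route this paper takes: Theorem~\ref{thmSangShiYee} is obtained as the $d=e=2$ case of Theorem~\ref{thmOverPtnModd}, whose proof is precisely this verification scheme (only the cited Sang--Shi--Yee paper uses Bailey pairs). Your reduction of the middle identity to the even-index cases is also consistent with that machinery: comparing the paper's difference equations at $f=1$ and $f=2$ gives
\begin{align*}
U_{2k,2a}(x) - U_{2k,2a-1}(x) = xq\left( U_{2k,2a-1}(x) - U_{2k,2a-2}(x) \right),
\end{align*}
which at $x=1$ is exactly your $\tfrac{1}{1+q}$, $\tfrac{q}{1+q}$ decomposition; and the equality $\overline{U}_{2k,2a-1}(n)=\overline{U}_{2k,2a}(n)$ is Lemma~\ref{lemmaUbarAdjustment}, where condition $\mathbf{\overline{(iii)}}$ makes the bound in $\mathbf{\overline{(i)}}$ insensitive to the residue.

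The genuine gap is at the step you yourself defer as ``the main obstacle'': the actual shape of the functional equations. Neither of your two concrete suggestions works. There is no self-contained ``base $q^2$'' Andrews-type recursion for the $U$-family alone, and the parity conditions cannot be peeled off as separate odd/even factors, because the gap condition \textbf{(iv)} ties each part to its neighbour of the opposite parity --- the very coupling you point to. The paper's resolution (Lemma~\ref{lemmaUUbarFunclEqns}) is that the recursion necessarily exchanges the two families: the overpartitions counted by $U_{2k,2a+f}$ but not by $U_{2k,2a+f-1}$ have $f_1$ at its maximum; deleting all $1$'s and the $\overline{1}$ (if present) and subtracting one from every remaining part flips the parity of \emph{every} part, turning conditions \textbf{(ii)}--\textbf{(iii)} into $\mathbf{\overline{(ii)}}$--$\mathbf{\overline{(iii)}}$, so that
\begin{align*}
U_{2k,2a+f}(x) - U_{2k,2a+f-1}(x)
= (xq)^{2a+f-1}\,\overline{U}_{2k,2(k-a)}(xq) + (xq)^{2a+f+1}\,\overline{U}_{2k,2(k-a-1)}(xq),
\end{align*}
the two terms corresponding to $f_{\overline{1}}=0$ or $1$, and symmetrically the $\overline{U}$-differences are expressed through values of $U$ at $xq$. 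This coupling is unavoidable: $U$ and $\overline{U}$ have different product evaluations ($(-q;q)_\infty$ versus $(-q^2;q^2)_\infty^2$), so no recursion inside a single family could determine both, and accordingly the candidate series must come in two different shapes (the paper's $\alpha^f_n,\beta^f_n$ versus $\overline{\alpha}_n,\overline{\beta}_n$). Once this coupled system, the initial conditions of Lemma~\ref{lemmaUUbarInitVals}, and the uniqueness principle for $q$-difference equations are in place, the remainder of your plan --- term-by-term verification, then $x=1$, Euler's identity, and Jacobi's triple product --- is exactly the paper's proof.
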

Sang, Shi and Yee used Bailey pairs~\cite{Andrews-bailey} to prove the theorem above.  

Our main result is the modulo $d$ extension of Theorem \ref{thmSangShiYee}, 
as well as a constructive and unifying approach to the proofs.  
This will also supply the missing cases of Theorem \ref{thmSangShiYee}.  

We will be using the non-negative parameters $d$, $k$, $a$, $e$, and $f$ such that 
\begin{align}
\label{defParams}
  d \geq 1, \quad k \geq 1, \quad 
  0 \leq a \leq k, \quad \textrm{ and } 1 \leq e, f \leq d.  
\end{align}
throughout.  
For extreme cases, we will allow $f = 0$, as explained in the proofs below.  
The main definition of this paper inspired by Definition \ref{defParityOverptn} 
is presented next.  

\begin{defn}
\label{defMain}
  For parameters described by \eqref{defParams} and any positive integer $l$, 
  let $u_{dk+e, da+f}(m, n)$ be the number of partitions of $n$ into $m$ parts satisfying 
  \begin{itemize}
  \item[{\bf (i)}] $ f_1 \leq da + f - 1 + (d-1)f_{\overline{1}} $, 
  \item[{\bf (ii)}] $ f_{2l-1} \geq (d-1)f_{\overline{2l-1}}$, 
  \item[{\bf (iii)}] $ f_{2l} + f_{\overline{2l}} \equiv 0 \pmod{d} $, 
  \item[{\bf (iv)}] $ f_l + f_{\overline{l}} + f_{l+1} 
    \leq dk + e - 1 + (d-1) f_{\overline{l+1}} $; 
  \end{itemize}
  and let $\overline{u}_{dk+e, da+f}(m, n)$ be 
  the number of partitions of $n$ into $m$ parts satisfying 
  \begin{itemize}
  \item[$\mathbf{\overline{(i)}}$] $ f_1 \leq da + f - 1 + (d-1)f_{\overline{1}} $, 
  \item[$\mathbf{\overline{(ii)}}$] $ f_{2l} \geq (d-1)f_{\overline{2l}}$, 
  \item[$\mathbf{\overline{(iii)}}$] $ f_{2l-1} + f_{\overline{2l-1}} \equiv 0 \pmod{d} $, 
  \item[$\mathbf{\overline{(iv)}}$] $ f_l + f_{\overline{l}} + f_{l+1} 
    \leq dk + e - 1 + (d-1) f_{\overline{l+1}} $.  
  \end{itemize}
  Set 
  \begin{align}
  \nonumber 
    U_{dk+e, da+f}(x) = \sum_{m, n \geq 0} u_{dk+e, da+f}(m, n) x^m q^n, 
  \end{align}
  and 
  \begin{align}
  \nonumber 
    \overline{U}_{dk+e, da+f}(x) 
    = \sum_{m, n \geq 0} \overline{u}_{dk+e, da+f}(m, n) x^m q^n.  
  \end{align}
\end{defn}
The dependence of the enumerants and generating functions on $d$ is not made explicit 
to avoid an excess of indices such as ${}_du_{dk+e, da+f}(m, n)$.  
Then, we have the following theorem.  

\begin{theorem}
\label{thmOverPtnModd}
  For parameters as in \eqref{defParams} with $e = d$ or $2e = d$, 
  \begin{align}
  \nonumber 
    U_{dk+e, da+f}(1) = \begin{cases}
      \frac{ (1 - q^{d+f-e}) }{ (1 - q^d) }
      \frac{( q^{da+e}, q^{2dk-da+e}, q^{2dk+2e} ; q^{2dk+2e})_\infty}
      { ( q; q^{2})_\infty ( q^{d}; q^{d})_\infty }& \vspace{3mm} \\ 
      + \frac{ (q^{d+f-e} - q^d) }{ (1 - q^d) }
      \frac{( q^{da-d+e}, q^{2dk-da+d+e}, q^{2dk+2e} ; q^{2dk+2e})_\infty}
      { ( q; q^{2})_\infty ( q^{d}; q^{d})_\infty }, 
      & \textrm{ if } f < e, \vspace{5mm} \\
      \frac{( q^{da+e}, q^{2dk-da+e}, q^{2dk+2e} ; q^{2dk+2e})_\infty}
      { ( q; q^{2})_\infty ( q^{d}; q^{d})_\infty }, 
      & \textrm{ if } f = e, \vspace{5mm} \\
      \frac{ (q^{f-e} - q^d) }{ (1 - q^d) }
      \frac{( q^{da+e}, q^{2dk-da+e}, q^{2dk+2e} ; q^{2dk+2e})_\infty}
      { ( q; q^{2})_\infty ( q^{d}; q^{d})_\infty }& \vspace{3mm} \\
      + \frac{ (1 - q^{f-e}) }{ (1 - q^d) }
      \frac{( q^{da+d+e}, q^{2dk-da-d+e}, q^{2dk+2e} ; q^{2dk+2e})_\infty}
      { ( q; q^{2})_\infty ( q^{d}; q^{d})_\infty }, 
      & \textrm{ if } f > e;
    \end{cases}
    \vspace{3mm} 
  \end{align}
  \begin{align}
  \nonumber 
    \overline{U}_{dk+e, da+f}(1) & = \frac{ ( -q^{d}; q^{d})_\infty 
        ( q^{da+d}, q^{2dk-da-d+2e}, q^{2dk+2e} ; q^{2dk+2e})_\infty }
      { ( q^{2}; q^{2})_\infty ( q^{d}; q^{2d})_\infty }.  
  \end{align}
\end{theorem}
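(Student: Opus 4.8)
The plan is to treat the two-variable generating functions $U_{dk+e,da+f}(x)$ and $\overline{U}_{dk+e,da+f}(x)$ of Definition \ref{defMain} as the primary objects, to produce a system of functional equations that they satisfy, to exhibit explicit Andrews--Gordon type series solving the same system, and finally to specialize at $x=1$ by means of the Jacobi triple product identity. The restriction $e=d$ or $2e=d$ is exactly what makes the last step collapse to a clean product: these two choices give modulus $2dk+2e=2d(k+1)$ and $2dk+2e=d(2k+1)$, the even and odd cases that parallel the two branches of Theorem \ref{thmRRG-GEA-parity}.

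First I would set up the functional equations. Following the construction sketched in the abstract, one isolates the overpartitions in which the bound (i) on $f_1$ is attained by taking a difference such as $U_{dk+e,da+f}(x)-U_{dk+e,da+f-1}(x)$, which records precisely the configurations whose number of $1$'s sits in the top residue class allowed by $f_1\le da+f-1+(d-1)f_{\overline 1}$. Deleting the smallest parts and subtracting $1$ from every remaining part is a dilation $x\mapsto xq$ (or $x\mapsto xq^2$, according to the parity of the part removed) that re-expresses the difference as a monomial in $x$ and $q$ times another member of the same family with shifted first and second indices; here the gap condition (iv) turns into the analogous condition with the roles of $k$ and $a$ interchanged, exactly as in Andrews' original recurrence for $Q_{k,a}(x;q)$. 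The parity-mod-$d$ requirement (iii), $f_{2l}+f_{\overline{2l}}\equiv 0\pmod d$, forces the even parts to occur in blocks whose size is a multiple of $d$; tracking these blocks is what produces the factors $1/(q;q^2)_\infty$ and $1/(q^d;q^d)_\infty$ in the $U$-formula and the factors $(-q^d;q^d)_\infty$, $1/(q^2;q^2)_\infty$, $1/(q^d;q^{2d})_\infty$ in the $\overline U$-formula.

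With the system and its boundary data (the cases $a=0$, and the degenerate value $f=0$ flagged after \eqref{defParams}) in place, I would then simply verify---this is the ``verification type'' step promised in the abstract---that the explicit series obtained from $Q_{k,a}(x;q)$ by inserting the extra overline and mod-$d$ parameters satisfies the same functional equations and the same initial conditions; uniqueness of the solution then identifies it with $U_{dk+e,da+f}(x)$, respectively $\overline U_{dk+e,da+f}(x)$. Setting $x=1$ and applying the Jacobi triple product identity to the resulting bilateral sum converts each series into triple products of the form $(q^{da+e},q^{2dk-da+e},q^{2dk+2e};q^{2dk+2e})_\infty$ and their $a\mapsto a\pm 1$ shifts. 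For $\overline U$ the dependence on $f$ disappears, because after the shift that removes the $1$'s the analog of bound (i) becomes vacuous; this is why the stated $\overline U$-formula is independent of $f$ and why $\overline U_{2k,2a-1}=\overline U_{2k,2a}$ in the $d=2$ specialization of Theorem \ref{thmSangShiYee}.

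I expect the main obstacle to be the combinatorial bookkeeping of the overlines together with the mod-$d$ parity constraint as parts are removed in deriving the functional equations: a single overlined part may, through conditions (ii) and (iv) and their barred analogues together with the relaxations $(d-1)f_{\overline{l+1}}$, shift an entire residue class of frequencies, and the accounting must stay consistent across the even/odd alternation. This is precisely the source of the three-way split of the $U$-formula into $f<e$, $f=e$, $f>e$: the comparison of $f$ with $e$ determines which residue class mod $d$ the base case of the recursion lands in, and summing a frequency over part of such a class yields the partial factors $\tfrac{1-q^{d+f-e}}{1-q^d}$ and $\tfrac{q^{d+f-e}-q^d}{1-q^d}$ (for $f<e$), the pure weight $1$ (for $f=e$), and $\tfrac{q^{f-e}-q^d}{1-q^d}$ and $\tfrac{1-q^{f-e}}{1-q^d}$ (for $f>e$) that weight the two triple products; note that in each case the two weights sum to $1$. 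Once these weights are pinned down correctly, the remaining steps are routine $q$-series manipulation.
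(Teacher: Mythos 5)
Your proposal takes essentially the same route as the paper: the differences $U_{dk+e,da+f}(x)-U_{dk+e,da+f-1}(x)$ are resolved combinatorially by stripping the $1$'s and subtracting one from all remaining parts (the paper's Lemma~\ref{lemmaUUbarFunclEqns}), initial conditions at $x=0$ and at second index $0$ are recorded (Lemma~\ref{lemmaUUbarInitVals}), explicit $q$-hypergeometric series are then verified to satisfy the same system and identified by the uniqueness (``defining $q$-difference equations'') principle (Lemma~\ref{lemmaUUbarClaimedSolns}), and finally $x=1$ together with Euler's identity and the Jacobi triple product yields the stated products. The only notable misattribution is where the hypothesis $e=d$ or $2e=d$ is used: in the paper it is needed to verify the boundary condition $U_{dk+e,0}(x)=0$, i.e.\ the vanishing of $\alpha^0_n(x)+\beta^0_n(x)$, not to make the triple-product step collapse.
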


Observe that 
\begin{align}
\nonumber 
  U_{dk+e, da+f}(1) 
  = \sum_{n \geq 0} \left( \sum_{m \geq 0} u_{dk+e, da+f}(m, n) \right) q^n, 
\end{align}
and 
\begin{align}
\nonumber 
  \overline{U}_{dk+e, da+f}(1) 
  = \sum_{n \geq 0} \left( \sum_{m \geq 0} \overline{u}_{dk+e, da+f}(m, n) \right) q^n, 
\end{align}
by definition.  
In other words, $U_{dk+e, da+f}(1)$ and $U_{dk+e, da+f}(1)$ 
are the univariate partition generating functions of the partitions enumerated 
by the first and the second part of the Definition \ref{defMain} 
with any number of parts.  
For $d=e=1$, Theorem \ref{thmOverPtnModd} is Theorem 
\ref{thmRRG-overptn}~\cite{CL-overptns, Lovejoy-RRG-overptn}.  
For $d=e=2$, it is Theorem \ref{thmSangShiYee}~\cite{SSY-parity-overptn}.  
Other cases are new.  

It is straightforward, however tedious, 
to interpret the infinite products on the right hand sides 
of the $q$-series identities in Theorem \ref{thmOverPtnModd} 
as overpartition identities or partition-overpartition identities~\cite{TheBlueBook}.  

\begin{cor}
\label{corollaryOverPtnModdComb}
  For parameters as in \eqref{defParams}, set 
  \begin{align}
  \nonumber 
    \mathcal{U}_{dk+e, da+f}(n) = \sum_{m \geq 0} u_{dk+e, da+f}(m, n) \quad 
    \textrm{ and } \quad 
    \overline{\mathcal{U}}_{dk+e, da+f}(n) 
    = \sum_{m \geq 0} \overline{u}_{dk+e, da+f}(m, n), 
  \end{align}
  i.e. $\mathcal{U}_{dk+e, da+f}(n)$ (respectively, $\overline{\mathcal{U}}_{dk+e, da+f}(n)$) 
  counts the number of overpartitions of $n$ satisfying {\bf (i)} - {\bf (iv)} 
  (respectively, $\mathbf{\overline{(i)}}$ - $\mathbf{\overline{(iv)}}$) 
  in Definition \ref{defMain}, without taking the number of parts into account.  
  
  Let $\mathcal{G}_{dk+d, da+d}(n)$ be the number of partitions of $n$ 
  such that all even parts are multiples of $d$ 
  which are not congruent to 0 or $\pm(da+d) \pmod{2dk+2d}$;  
  
  $\mathcal{H}_{dk+d, da+d}(n)$ be the number of overpartitions of $n$ 
  such that all non-overlined parts are multiples of $d$ 
  which are not congruent to 0 or $\pm(da+d) \pmod{2dk+2d}$; 
  
  $\overline{\mathcal{G}}_{dk+d, da+d}(n)$ be the number of overpartitions of $n$ 
  such that the overlined parts are multiples of $d$, 
  odd non-overlined parts are odd multiples of $d$ 
  which are not congruent to 0 or $\pm(da+d) \pmod{2dk+2d}$; 
  
  $\overline{\mathcal{H}}_{dk+d, da+d}(n)$ be the number of pairs $(\lambda, \mu)$ 
  of a partition $\lambda$ and an overpartition $\mu$ such that 
  the total of sum of parts of $\lambda$ and sum of parts of $\mu$ is $n$, 
  $\lambda$ has even parts only, 
  all parts of $\mu$ are multiples of $d$,
  and the non-overlined parts of $\mu$ are odd multiples of $d$ 
  which are not congruent to 0 or $\pm(da+d) \pmod{2dk+2d}$.  
  
  Then; 
  for $e=d$ and $d$ even, $\mathcal{U}_{dk+d, da+d}(n) = \mathcal{G}_{dk+d, da+d}(n)$; 
  
  for $e = d$ and $d$ odd, $\mathcal{U}_{dk+d, da+d}(n) = \mathcal{H}_{dk+d, da+d}(n)$; 
  
  for $2e = d$,  $\mathcal{U}_{dk+\frac{d}{2}, da+\frac{d}{2}}(n) 
  = \mathcal{G}_{dk+\frac{d}{2}, da+d}(n)$; 
  
  for $e=d$ and $d$ odd, $\overline{\mathcal{U}}_{dk+d, da+f}(n)
  = \overline{\mathcal{G}}_{dk+d, da+d}(n)$;
  
  for $e=d$ and $d$ even, $\overline{\mathcal{U}}_{dk+d, da+f}(n)
  = \overline{\mathcal{H}}_{dk+d, da+d}(n)$;
  
  for $2e=d$, $\overline{\mathcal{U}}_{dk+\frac{d}{2}, da+f}(n)
  = \overline{\mathcal{H}}_{dk+\frac{d}{2}, da+d}(n)$.  
  
  Also; 
  for $e = d$, $d$ even, and $f < e$,
  \begin{align}
  \nonumber 
    & \mathcal{U}_{dk+d, da+f}(n) - \mathcal{U}_{dk+d, da+f}(n-d) \\
  \nonumber 
    & = \mathcal{G}_{dk+d, da+d}(n) - \mathcal{G}_{dk+d, da+d}(n-d-f+e)
    + \mathcal{G}_{dk+d, da}(n-d-f+e) - \mathcal{G}_{dk+d, da}(n-d); 
  \end{align}
  for $e = d$, $d$ even, and $f > e$,
  \begin{align}
  \nonumber 
    & \mathcal{U}_{dk+d, da+f}(n) - \mathcal{U}_{dk+d, da+f}(n-d) \\
  \nonumber 
    & = \mathcal{G}_{dk+d, da+d}(n-f+e) - \mathcal{G}_{dk+d, da+d}(n-d)
    + \mathcal{G}_{dk+d, da+2d}(n) - \mathcal{G}_{dk+d, da+2d}(n-f+e); 
  \end{align}
  for $e = d$, $d$ odd, and $f < e$,
  \begin{align}
  \nonumber 
    & \mathcal{U}_{dk+d, da+f}(n) - \mathcal{U}_{dk+d, da+f}(n-d) \\
  \nonumber 
    & = \mathcal{H}_{dk+d, da+d}(n) - \mathcal{H}_{dk+d, da+d}(n-d-f+e)
    + \mathcal{H}_{dk+d, da}(n-d-f+e) - \mathcal{H}_{dk+d, da}(n-d); 
  \end{align}
  and for $e = d$, $d$ odd, and $f > e$,
  \begin{align}
  \nonumber 
    & \mathcal{U}_{dk+d, da+f}(n) - \mathcal{U}_{dk+d, da+f}(n-d) \\
  \nonumber 
    & = \mathcal{H}_{dk+d, da+d}(n-f+e) - \mathcal{H}_{dk+d, da+d}(n-d)
    + \mathcal{H}_{dk+d, da+2d}(n) - \mathcal{H}_{dk+d, da+2d}(n-f+e).  
  \end{align}
\end{cor}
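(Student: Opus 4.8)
The plan is to derive every assertion of Corollary~\ref{corollaryOverPtnModdComb} directly from the product evaluations of Theorem~\ref{thmOverPtnModd}, whose generating functions are already established; what remains is purely a matter of reading each infinite product as the generating function of the announced family of partitions or overpartitions, and, for the finite-difference statements, of clearing one rational factor and comparing coefficients of $q^n$. First I would isolate the two product identities on which all interpretations rest. One is Euler's identity
\begin{align}
\nonumber
  \frac{1}{(q; q^2)_\infty} = (-q; q)_\infty,
\end{align}
which lets the factor $1/(q;q^2)_\infty$ occurring in $U_{dk+e,da+f}(1)$ be read either as a generating function for unrestricted odd parts or as one for distinct overlined parts; this single switch makes the dichotomy between $\mathcal{G}$ and $\mathcal{H}$ (and between $\overline{\mathcal{G}}$ and $\overline{\mathcal{H}}$) hinge on the parity of $d$. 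The other is the dilation of the elementary identity already invoked for $A_{k,a}$ in Section~\ref{secIntro},
\begin{align}
\nonumber
  \prod_{\substack{ n \geq 1 \\ n \not\equiv 0, \pm c \!\!\!\! \pmod{M} }} \frac{1}{1 - q^n}
  = \frac{ (q^{c}, q^{M-c}, q^{M}; q^{M})_\infty }{ (q; q)_\infty },
\end{align}
with $q$ replaced by $q^d$, $q^2$, or $q^{2d}$: each triple product of modulus $2dk+2e$ then becomes a generating function for parts in a fixed arithmetic progression avoiding two symmetric residues modulo $2dk+2e$.

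With these in hand, the equalities $\mathcal{U}=\mathcal{G}$, $\mathcal{U}=\mathcal{H}$, $\overline{\mathcal{U}}=\overline{\mathcal{G}}$, and $\overline{\mathcal{U}}=\overline{\mathcal{H}}$ fall out of the $f=e$ branch of Theorem~\ref{thmOverPtnModd}, where the rational prefactor is absent. For $e=d$ and $f=e$ I would factor
\begin{align}
\nonumber
  U_{dk+d,da+d}(1)
  = \frac{1}{(q;q^2)_\infty}
    \cdot \frac{ (q^{da+d}, q^{2dk-da+d}, q^{2dk+2d}; q^{2dk+2d})_\infty }{ (q^d; q^d)_\infty },
\end{align}
reading the first factor as odd parts and, after dilating by $q^d$, the second as multiples of $d$ avoiding $0,\pm(da+d)\pmod{2dk+2d}$; for $d$ even these multiples are exactly the admissible even parts, giving $\mathcal{G}$, while for $d$ odd Euler's identity recasts the first factor as distinct overlined parts and the multiples of $d$ may have either parity, giving $\mathcal{H}$. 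The $\overline{U}$ formula is handled the same way: $(-q^d;q^d)_\infty$ supplies distinct overlined multiples of $d$, $1/(q^2;q^2)_\infty$ supplies unrestricted even non-overlined parts, and the remaining quotient over $(q^d;q^{2d})_\infty$ supplies odd multiples of $d$ avoiding the two residues, matching $\overline{\mathcal{G}}$ for $d$ odd and the partition-overpartition pairs of $\overline{\mathcal{H}}$ for $d$ even.

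The finite-difference identities come from the $f\neq e$ branches, where the common prefactor $1/(1-q^d)$ is precisely the device that produces a difference. I would multiply the two-term formula through by $(1-q^d)$, which on the coefficient side is $\mathcal{U}(n)-\mathcal{U}(n-d)$, observe that the two triple products are $\mathcal{G}$ (respectively $\mathcal{H}$) generating functions with residues shifted by $\pm d$ --- namely $da+d$ against $da$ when $f<e$, and $da+d$ against $da+2d$ when $f>e$ --- and note that the monomial numerators $1-q^{d+f-e}$ and $q^{d+f-e}-q^d$ introduce exactly the argument shifts $n$, $n-(d+f-e)$, and $n-d$ recorded in the statement. Comparing coefficients of $q^n$ then yields the four displayed identities verbatim.

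The hard part will be the residue bookkeeping, and in particular the case $2e=d$. There the excluded residue $da+e$ is an \emph{odd} multiple of $e$ and hence does not lie on the lattice of multiples of $d=2e$, so one cannot interpret the triple product as merely restricting the even (multiple-of-$d$) parts; instead the factor $1/(q;q^2)_\infty$ must be combined \emph{with} the triple product, and the correct target class emerges only after separating odd residues from even ones and tracking the parities of $d$, $e$, and $a$. Managing these interlocking parity cases --- and verifying the analogous split for the overpartition quantities $\overline{\mathcal{U}}_{dk+\frac{d}{2},da+f}$ --- is the only genuinely delicate step; everything else reduces to routine coefficient extraction.
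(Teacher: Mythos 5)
Your proposal follows the same route the paper itself intends: the paper gives no written proof of this corollary beyond the sentence preceding it, which declares the interpretation of the products in Theorem~\ref{thmOverPtnModd} "straightforward, however tedious," and your ingredients --- Euler's identity $1/(q;q^2)_\infty = (-q;q)_\infty$, the dilated triple-product reading of congruence classes, and clearing the prefactor $1/(1-q^d)$ followed by coefficient comparison for the finite-difference identities --- are exactly that interpretation. Your handling of the $e=d$ cases (both parities of $d$, both $\mathcal{U}$ and $\overline{\mathcal{U}}$) and of the difference identities is correct as outlined. (A minor remark: for $e=d$ the constraint $1 \leq f \leq d$ in \eqref{defParams} forces $f \leq e$, so the two displayed "$f>e$" difference identities are vacuous as stated.)

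The case you flagged as delicate, $2e=d$, deserves a stronger verdict than "delicate": your observation that $da+e$ is an odd multiple of $e$, hence off the lattice of multiples of $d$, is precisely why the corollary \emph{as printed} cannot be recovered by this or any other reading of the products. When $e$ is odd, the factors $(q^{da+e}, q^{2dk-da+e}; q^{2dk+2e})_\infty$ cancel against $(q;q^2)_\infty$ rather than $(q^d;q^d)_\infty$, so the exclusion $\pm(da+e) \pmod{2dk+2e}$ falls on the \emph{odd} parts, while the even parts (multiples of $d$) are barred only from the residue $0$; this is a different class from $\mathcal{G}_{dk+\frac{d}{2}, da+d}$, which imposes the exclusions on the even parts. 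Concretely, take $d=2$, $e=f=1$, $k=1$, $a=0$: Theorem~\ref{thmOverPtnModd} gives
\begin{align}
\nonumber
  U_{3,1}(1) = \frac{(q, q^5, q^6; q^6)_\infty}{(q;q)_\infty}
  = \frac{1}{(q^2, q^3, q^4; q^6)_\infty},
\end{align}
the generating function for partitions into parts $\equiv 2,3,4 \pmod 6$, so $\mathcal{U}_{3,1}(1)=0$ (as one also checks directly from Definition~\ref{defMain}: the overpartition $1$ violates {\bf (i)} and $\overline{1}$ violates {\bf (ii)}); whereas $\mathcal{G}_{3,2}$ excludes all even residues $0, \pm 2 \pmod 6$ and thus counts partitions into odd parts, giving $\mathcal{G}_{3,2}(1)=1$. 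Moreover, when $e$ is even (so $d \equiv 0 \pmod 4$), the residues $da+e$ are even yet not multiples of $d$, so those numerator factors cancel against nothing and no congruence-class interpretation of the displayed shape exists at all. So your plan is the right one and your refusal to claim this case was sound: the obstruction is a misstatement in the corollary's $2e=d$ entries (the exclusions must be moved to the odd parts), not a gap in your method.
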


The paper is organized as follows.  
In section \ref{secProofs}, verification type proofs are given 
for Theorem \ref{thmOverPtnModd}, along with all necessary auxiliary results.  
In section \ref{secConstr}, the construction of series 
as solutions of functional equations in section \ref{secProofs} is outlined.  
The approach is the same as in~\cite{KK-AndrewsStyle}.  
We conclude with some commentary and future work in section \ref{secComm}.  

\section{Proof of Theorem \ref{thmOverPtnModd}}
\label{secProofs} 

\begin{lemma}
\label{lemmaUbarAdjustment}
  With parameters as in \eqref{defParams}, 
  \begin{align}
  \nonumber 
    \overline{U}_{dk+e, da+f}(x) = \overline{U}_{dk+e, da+d}(x).  
  \end{align}
\end{lemma}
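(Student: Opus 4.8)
The plan is to prove the slightly stronger statement that $\overline{U}_{dk+e,\,da+f}(x)$ does not depend on $f$ at all as $f$ ranges over $1 \le f \le d$; specializing to $f = d$ then gives the stated identity. Rather than manipulating series, I would argue directly that the underlying families of overpartitions coincide. The point is that among the four defining conditions $\mathbf{\overline{(i)}}$--$\mathbf{\overline{(iv)}}$ of the barred part of Definition \ref{defMain}, only $\mathbf{\overline{(i)}}$, the bound on $f_1$, involves the parameter $f$; conditions $\mathbf{\overline{(ii)}}$, $\mathbf{\overline{(iii)}}$, $\mathbf{\overline{(iv)}}$ are literally the same for every $f$. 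Hence it suffices to show that the set of admissible values of $f_1$ (for each fixed value of the overline frequency $f_{\overline 1}$) is unchanged as $f$ varies, since then the entire collection of overpartitions counted by $\overline{u}_{dk+e,\,da+f}(m,n)$ is the same set, and the weights $\sum x^m q^n$ must agree term by term.

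First I would specialize condition $\mathbf{\overline{(iii)}}$ to $l = 1$, obtaining $f_1 + f_{\overline 1} \equiv 0 \pmod d$, so that $f_1$ is confined to the residue class $-f_{\overline 1} \pmod d$. Since $f_{\overline 1} \in \{0,1\}$, I split into two cases and read condition $\mathbf{\overline{(i)}}$ as the upper bound $f_1 \le da + f - 1 + (d-1)f_{\overline 1}$. In each case I would locate the largest element of the relevant residue class lying at or below this bound. For $f_{\overline 1} = 0$ the bound $da + f - 1$ lies in the interval $[da,\,da+d-1]$, whose only multiple of $d$ is $da$, so the admissible set is $\{0, d, \dots, da\}$; for $f_{\overline 1} = 1$ the bound $da + d - 2 + f$ lies strictly below $(a+2)d - 1$ and at or above $(a+1)d - 1$, so the admissible set is $\{d-1,\,2d-1,\,\dots,\,(a+1)d-1\}$. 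In both cases this set is visibly independent of $f$.

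The mechanism making this work is that, as $f$ runs over $\{1,\dots,d\}$, the upper bound in $\mathbf{\overline{(i)}}$ sweeps through an interval of length exactly $d-1$, which is strictly shorter than the spacing $d$ between consecutive members of a fixed residue class; consequently the topmost admissible value can never jump. With the $f_1$-options pinned and every other condition independent of $f$, the two overpartition families are identical, and the lemma follows. The only place needing care---and the step I expect to be the sole (minor) obstacle---is the boundary arithmetic when $f_{\overline 1} = 1$, where one must check that the extra $(d-1)f_{\overline 1}$ term does not nudge the cutoff up to the next class member $(a+2)d-1$; this is exactly the chain $(a+1)d - 1 \le da + (d-1) + (d-1) < (a+2)d - 1$, valid for all $d \ge 1$. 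I would additionally dispose of the degenerate case $d = 1$ in one line, since there $f = d = 1$ is forced and the statement reads $\overline{U}_{dk+e,\,da+1}(x) = \overline{U}_{dk+e,\,da+1}(x)$; and I would remark that the hypothesis $f \ge 1$ is genuinely needed, because at $f = 0$ the top admissible value for $f_{\overline 1} = 0$ would drop from $da$ to $da - d$, breaking the coincidence.
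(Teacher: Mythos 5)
Your proposal is correct and follows essentially the same route as the paper: both isolate condition $\mathbf{\overline{(i)}}$ as the only $f$-dependent constraint, invoke $\mathbf{\overline{(iii)}}$ at $l=1$ to confine $f_1+f_{\overline{1}}$ to a residue class modulo $d$, and observe that since $f-1$ only sweeps the values $0,1,\dots,d-1$ (a window shorter than the class spacing $d$), the admissible set of pairs $(f_1, f_{\overline{1}})$ is unchanged. Your explicit case split on $f_{\overline{1}}\in\{0,1\}$ with the boundary arithmetic is just a more detailed rendering of the paper's one-line observation that every value of $f-1$ in that range yields the same solutions.
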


\noindent
{\bf Remark:} For $d = 2$, this is Theorem 3.4 in~\cite{SSY-parity-overptn}.  

\begin{proof}
  This is equivalent to 
  \begin{align}
  \nonumber 
    \overline{u}_{dk+e, da+f}(m, n) = \overline{u}_{dk+e, da+d}(m, n).  
  \end{align}
  The only condition in the definition of $\overline{u}_{dk+e, da+f}(m, n)$ 
  in which $f$ appears is $\mathbf{\overline{(i)}}$.  
  Rewriting, we have 
  \begin{align}
  \nonumber 
    (f_1 + f_{\overline{1}}) - d f_{\overline{{1}}} - da \leq f-1.  
  \end{align}
  Because of $\mathbf{\overline{(iii)}}$, 
  the left hand side of the inequality above is a multiple of $d$.  
  On the other hand, $f-1$ may assume values 0, 1, \ldots, $d-1$.  
  For fixed $d$ and $a$, any value of $f-1$ among the listed values 
  will yield the same solutions for $f_1$ and $f_{\overline{1}}$ 
  in that inequality.  
\end{proof}

\begin{lemma}
\label{lemmaUUbarFunclEqns}
  With parameters as in \eqref{defParams}, 
  \begin{align}
  \nonumber 
    & U_{dk+e, da+f}(x) - U_{dk+e, da+f-1}(x) \\
  \nonumber 
    & = \begin{cases}
      (xq)^{da+f-1} \overline{U}_{dk+e, d(k-a)+d}(xq)
      + (xq)^{da+d+f-1} \overline{U}_{dk+e, d(k-a-1)+d}(xq) 
       & \textrm{ if } f \leq e, \\
      (xq)^{da+f-1} \overline{U}_{dk+e, d(k-a-1)+d}(xq)
      + (xq)^{da+d+f-1} \overline{U}_{dk+e, d(k-a-2)+d}(xq) 
       & \textrm{ if } f > e, 
      \end{cases}
  \end{align}
  \begin{align}
  \nonumber 
    \overline{U}_{dk+e, da+d}(x) - \overline{U}_{dk+e, d(a-1)+d}(x) 
    = (xq)^{da} U_{dk+e, d(k-a)+e}(xq)
      + (xq)^{da+d} U_{dk+e, d(k-a-1)+e}(xq).  
  \end{align}
\end{lemma}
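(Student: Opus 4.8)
The plan is to prove both functional equations by the same combinatorial device: identify the overpartitions enumerated by each difference of generating functions, strip off all parts equal to $1$ (overlined or not), and subtract $1$ from every remaining part. Subtracting $1$ sends a part $i\ge 2$ to $i-1$, so it interchanges the roles of odd and even indices; this is exactly why conditions {\bf (ii)}--{\bf (iii)} turn into $\mathbf{\overline{(ii)}}$--$\mathbf{\overline{(iii)}}$ and conversely, and hence why the two equations couple $U$ with $\overline{U}$. Throughout I would use the bookkeeping that removing $c$ parts equal to $1$ multiplies the generating function by $(xq)^{c}$, since each such part carries a factor $xq$; and that if the parts $\ge 2$ become, after subtracting $1$, an overpartition with generating function $F(x)$, then those parts $\ge 2$ contribute $F(xq)$, one extra factor of $q$ per raised part.

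For the first identity, observe that the only condition in Definition \ref{defMain} depending on the second index is {\bf (i)}, so $U_{dk+e,da+f}(x)-U_{dk+e,da+f-1}(x)$ enumerates exactly the overpartitions with $f_1=da+f-1+(d-1)f_{\overline 1}$, the maximal value permitted by {\bf (i)}. Since $f_{\overline 1}\in\{0,1\}$, I would split into $f_{\overline 1}=0$ (giving $f_1=da+f-1$, contributing $(xq)^{da+f-1}$) and $f_{\overline 1}=1$ (giving $f_1=da+d+f-2$ together with an overlined $1$, contributing $(xq)^{da+d+f-1}$). In each case condition {\bf (iv)} at $l=1$ becomes, after the shift, condition $\mathbf{\overline{(i)}}$ of the image overpartition; reading off the resulting second index gives $d(k-a)+(e-f+1)$ and $d(k-a-1)+(e-f+1)$ respectively. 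When $f\le e$ the quantity $e-f+1$ already lies in $\{1,\dots,d\}$; when $f>e$ I would rewrite $e-f+1=(d+e-f+1)-d$ to move one multiple of $d$ into the $a$-part, and this is precisely what produces the two cases in the statement. Lemma \ref{lemmaUbarAdjustment} then erases the dependence on this residue, collapsing each second index to $\,\cdot+d$ and giving the stated right-hand side. I must also verify that $\mathbf{\overline{(ii)}}$--$\mathbf{\overline{(iv)}}$ for the image are just the original {\bf (ii)}--{\bf (iv)} reindexed by $l\mapsto l+1$, and that the map inverts by raising every part by $1$ and prepending the prescribed $1$'s.

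For the second identity the two generating functions share the index $f=d$ but differ in $a$, so $\overline{U}_{dk+e,da+d}(x)-\overline{U}_{dk+e,d(a-1)+d}(x)$ enumerates $\mathbf{\overline{(i)}}$--$\mathbf{\overline{(iv)}}$ overpartitions with $f_1$ in the window $\{da+(d-1)f_{\overline 1},\dots,da+d-1+(d-1)f_{\overline 1}\}$. The decisive extra input here is the congruence $\mathbf{\overline{(iii)}}$ at $l=1$, namely $f_1+f_{\overline 1}\equiv 0\pmod d$, which singles out a unique admissible $f_1$ in that window for each choice of $f_{\overline 1}$: one checks $f_1=da$ when $f_{\overline 1}=0$, and $f_1=da+d-1$ (plus an overlined $1$) when $f_{\overline 1}=1$, with prefactors $(xq)^{da}$ and $(xq)^{da+d}$. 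Stripping and shifting as before turns $\mathbf{\overline{(iv)}}$ at $l=1$ into {\bf (i)} of the image, whose second index comes out as $d(k-a)+e$ and $d(k-a-1)+e$, matching the two terms exactly.

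The step I expect to be most delicate is the edge bookkeeping rather than the main bijection: checking that the residue manipulation keeps the new $f$-parameter inside $\{1,\dots,d\}$ in the $f>e$ case, confirming that the $l=1$ instances of {\bf (ii)} and $\mathbf{\overline{(ii)}}$ are consistent with the prescribed boundary values (so that no overpartition is spuriously created or destroyed, and the forced congruence at the removed $1$'s is correctly consumed by the case split), and interpreting the boundary values $a=k$, $a=k-1$ at which an image index $d(k-a-1)$ or $d(k-a-2)$ degenerates. These degenerate cases are handled by the extreme-case conventions allowing $f=0$ that are announced just after \eqref{defParams}.
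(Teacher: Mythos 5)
Your proposal is correct and follows essentially the same route as the paper's proof: both identify the difference of generating functions with the overpartitions whose number of $1$'s is forced (maximal under {\bf (i)}, or pinned down by the window plus the congruence $\mathbf{\overline{(iii)}}$), then delete the $1$'s and subtract one from all remaining parts so that the parity switch exchanges the two condition families, invoking Lemma \ref{lemmaUbarAdjustment} to normalize the resulting residue $e-f+1$ to $d$. The only difference is presentational — the paper phrases the bijection as a recurrence on the coefficients $u_{dk+e,da+f}(m,n)$ and writes out one case in detail, while you argue directly at the generating-function level and sketch all cases, including the $f>e$ residue shift and the degenerate indices, which the paper leaves implicit.
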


\begin{proof}
  We write definitions of $U_{\cdot, \cdot}(x)$ and $\overline{U}_{\cdot, \cdot}(x)$ 
  as double power series, 
  then identify coefficients of $x^m q^n$ for each $m$ and $n$ on either 
  side of the functional equations to see that the 
  lemma is equivalent to the following recurrences.  
  \begin{align}
  \nonumber 
    & u_{dk+e, da+f}(m, n) - u_{dk+e, da+f}(m, n) \\
  \nonumber 
    & = \begin{cases}
      \overline{u}_{dk+e, d(k-a)+d}(m-da-f+1, n-m) & \\ 
      + \overline{u}_{dk+e, d(k-a-1)+d}(m-da-d-f+1, n-m) 
      & \textrm{ if } f \leq e, \\ 
      \overline{u}_{dk+e, d(k-a-1)+d}(m-da-f+1, n-m) & \\ 
      + \overline{u}_{dk+e, d(k-a-2)+d}(m-da-d-f+1, n-m) 
      & \textrm{ if } f > e, 
      \end{cases}
  \end{align}
  \begin{align}
  \nonumber 
    & \overline{u}_{dk+e, da+d}(m, n) - \overline{u}_{dk+e, d(a-1)+d}(m, n) \\
  \nonumber 
    & = u_{dk+e, d(k-a)+e}(m-da, n-m)
      + u_{dk+e, d(k-a-1)+e}(m-da-d, n-m).  
  \end{align}
  Because the proofs are almost the same after the necessary notational changes, 
  we will only prove 
  \begin{align}
  \nonumber 
    & u_{dk+e, da+f}(m, n) - u_{dk+e, da+f}(m, n) \\
  \nonumber 
    & = \overline{u}_{dk+e, d(k-a)+d}(m-da-f+1, n-m)
      + \overline{u}_{dk+e, d(k-a-1)+d}(m-da-d-f+1, n-m) 
  \end{align}
  for $f \leq e$.  
  
  All partitions enumerated by $u_{dk+e, da+f}(m, n)$ and $u_{dk+e, da+f-1}(m, n)$
  satisfy {\bf ({ii})}, {\bf ({iii})}, and {\bf ({iv})} in Definition \ref{defMain}.  
  Since 
  \begin{align}
  \nonumber 
    f_1 \leq da + (f-1) - 1 + (d-1)f_{\overline{1}} 
    \Rightarrow f_1 \leq da + f - 1 + (d-1)f_{\overline{1}}, 
  \end{align}
  all partitions counted by $u_{dk+e, da+f-1}(m, n)$ 
  are also counted by $u_{dk+e, da+f}(m, n)$.  
  The extra partitions in $u_{dk+e, da+f}(m, n)$ then satisfy 
  {\bf ({ii})}, {\bf ({iii})}, and {\bf ({iv})} in Definition \ref{defMain}
  as well as 
  \begin{align}
  \nonumber 
    f_1 = da + f - 1 + (d-1)f_{\overline{1}}, \quad \textrm{ or rewriting, } \quad 
    f_1 + f_{\overline{1}} = da + f - 1 + df_{\overline{1}}. 
  \end{align}
  Since $f_{\overline{1}} = $ 0 or 1, we have two cases.  
  \begin{align}
  \nonumber 
    & f_{\overline{1}} = 0 \Rightarrow f_1 + f_{\overline{1}} = da+f-1,  \\
  \nonumber 
    & f_{\overline{1}} = 1 \Rightarrow f_1 + f_{\overline{1}} = da+d+f-1.  
  \end{align}
  In either case, we'll delete all 1's and the $\overline{1}$, if any, 
  and subtract one from the remaining parts 
  in each of the aforementioned extra partitions.  
  After the deletion of the 1's and the $\overline{1}$, if any, 
  the remaining parts are at least two, 
  so subtracting one from all of them 
  will not introduce anomalies such as zeroes or negative parts.  
  The transformed partitions will satisfy $\mathbf{\overline{(ii)}}$ and $\mathbf{\overline{(iii)}}$ 
  because subtracting one from all parts will switch parities.  
  $\mathbf{\overline{(iv)}}$ is also satisfied, because it is the same as {\bf ({iv})}.  
  
  In the former case ($f_{\overline{1}} = 0 \Rightarrow f_1 + f_{\overline{1}} = da+f-1$), 
  {\bf ({iv})} for $l = 1$ before the deletion of 1's and the subtraction of ones is 
  \begin{align}
  \nonumber 
    f_1 + f_{\overline{1}} + f_2 \leq dk + e - 1 + (d-1)f_{\overline{2}} 
    \rightarrow 
    f_2 \leq d(k-a) + (e-f+1) - 1 + (d-1)f_{\overline{2}}.  
  \end{align}
  After the deletion of 1's and subtraction of ones, this becomes 
  \begin{align}
  \nonumber 
    f_1 \leq d(k-a) + (e-f+1) - 1 + (d-1)f_{\overline{1}}.  
  \end{align}
  This is an instance of $\mathbf{\overline{(i)}}$ in Definition \ref{defMain}.  
  Thus, these partitions after the described transformation 
  are enumerated by a $\overline{u}_{dk+e, d(k-a)+(e-f+1)}(\cdot, \cdot)$.  
  Since $1 \leq f \leq e \leq d$, we have $1 \leq e-f+1 \leq d$, 
  the last displayed inequality is equivalent to 
  \begin{align}
  \nonumber 
    f_1 \leq d(k-a) + d - 1 + (d-1)f_{\overline{1}}, 
  \end{align}
  and thus the said partitions are also counted by 
  $\overline{u}_{dk+e, d(k-a)+d}(\cdot, \cdot)$.  
  
  If we regard the deletion of the 1's as subtracting ones from 1's, 
  hence making them zeroes, as well; 
  we see that the weight of these excess partitions are reduced by the number of parts, $m$.  
  Thus, the new weight is $n-m$ for each transformed partition.  
  At the same time, we discard the $da+f-1$ 1's we deleted, 
  so the new number of parts is $m - da - f+1$.  
  In conclusion, in the former case we have considered, 
  each of the transformed partitions is counted by 
  $\overline{u}_{dk+e, d(k-a)+d}(m-da-f+1, n-m)$.  
  
  If we trace our steps back, 
  we see that the process yields a one-to-one correspondence.  
  In other words, beginning with any partition counted by 
  $\overline{u}_{dk+e, d(k-a)+d}(m-da-f+1, n-m)$, 
  we add one to all of the parts and append exactly $da+f-1$ non-overlined 1's, 
  and we end up with a partition enumerated by $u_{dk+e, da+f}(m, n)$, 
  but not $u_{dk+e, da+f-1}(m, n)$.  
  
  In the latter case 
  ($f_{\overline{1}} = 1 \Rightarrow f_1 + f_{\overline{1}} = da+d+f-1$), 
  the only difference is the extra $d$ on the right hand side.  
  Therefore, the toll $f_{\overline{1}} = 1$ takes is that the final partition 
  being counted by $\overline{u}_{dk+e, d(k-a-1)+d}(m-da-d-f+1, n-m)$, 
  when we adjust the procedure and the inequalities accordingly.  
  
  We have demonstrated one of the functional equations. 
  As stated at the beginning of the proof, 
  the others are very similar.  
\end{proof}

To solve functional equations 
using the defining $q$-difference equations principle~\cite{Andrews-parity}, 
we need initial conditions.  
At this point, we relax the condition on $f$ a little, and allow $f = 0$.  
It is convenient to keep in mind that $f$ only stands for a residue class, 
so $f = 0$ will correspond to $f = d$.  
The asserted solutions further down will explicate this point of view.  

Also, please notice that the definitions of $U_{dk+e, da+f}(x)$ 
and $\overline{U}_{dk+e, da+f}(x)$ imply that 
\begin{align}
\nonumber 
  u_{dk+e, da+f}(m,n) = \overline{u}_{dk+e, da+f}(m,n) = 0 
  \textrm{ for } m < 0 \textrm{ or } n < 0.  
\end{align}
This is easy to back up combinatorially.  
As $n$ accounts for the number being partitioned, any partition number 
for negative integers is zero.  
And, since $m$ keeps track of the number of parts in a partition,  
i.e. it \emph{counts} something, it has to be a non-negative integer.  

\begin{lemma}
\label{lemmaUUbarInitVals}
  With parameters as in \eqref{defParams}, 
  \begin{align}
  \nonumber 
    & U_{dk+e, da+f}(0) = \overline{U}_{dk+e, da+f}(0) = 1, \\
  \nonumber 
    & U_{dk+e, 0}(x) = \overline{U}_{dk+e, 0}(x) = 0.  
  \end{align}
\end{lemma}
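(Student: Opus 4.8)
The plan is to descend from the generating functions to their coefficients and argue combinatorially on the enumerants $u_{dk+e,da+f}(m,n)$ and $\overline{u}_{dk+e,da+f}(m,n)$, treating the two displayed lines separately. The first line concerns subscripts with $da+f\ge 1$ and the second is the degenerate subscript $da+f=0$, so the two assertions are complementary rather than conflicting.

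For $U_{dk+e,da+f}(0)=1$ I would start from $U_{dk+e,da+f}(x)=\sum_{m,n\ge 0}u_{dk+e,da+f}(m,n)\,x^m q^n$ and note that setting $x=0$ kills every term with $m\ge 1$, leaving $U_{dk+e,da+f}(0)=\sum_{n\ge 0}u_{dk+e,da+f}(0,n)\,q^n$. The only overpartition with $m=0$ parts is the empty one, whose weight is $n=0$ and all of whose frequencies vanish. I would then simply verify that the empty overpartition satisfies {\bf (i)}--{\bf (iv)}: each size/gap inequality collapses to $0\le da+f-1$ or $0\le dk+e-1$, both nonnegative since $da+f\ge 1$ and $k,e\ge 1$ by \eqref{defParams}, while the parity condition {\bf (iii)} reads $0\equiv 0\pmod d$. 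Hence $u_{dk+e,da+f}(0,0)=1$ and $u_{dk+e,da+f}(0,n)=0$ for $n>0$, so $U_{dk+e,da+f}(0)=1$; the same check against $\mathbf{\overline{(i)}}$--$\mathbf{\overline{(iv)}}$ yields $\overline{U}_{dk+e,da+f}(0)=1$.

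For $U_{dk+e,0}(x)=0$ I would read the vanishing subscript, under the relaxation allowing $f=0$, as forcing $a=0$ and $f=0$, and show that no admissible overpartition exists at all, so that every coefficient, and hence the whole series in $x$, vanishes. Specializing {\bf (i)} to $a=f=0$ gives $f_1\le -1+(d-1)f_{\overline 1}$. If $f_{\overline 1}=0$ this reads $f_1\le -1$, impossible; if $f_{\overline 1}=1$ it reads $f_1\le d-2$, but {\bf (ii)} at $l=1$ forces $f_1\ge (d-1)f_{\overline 1}=d-1$, a contradiction. Thus $u_{dk+e,0}(m,n)=0$ for all $m,n$ and $U_{dk+e,0}(x)=0$.

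The only place that needs care, and the mild obstacle, is the barred analogue, since there the contradiction in the subcase $f_{\overline 1}=1$ comes from a different defining condition. Specializing $\mathbf{\overline{(i)}}$ to $a=f=0$ again gives $f_1\le -1+(d-1)f_{\overline 1}$, which rules out $f_{\overline 1}=0$ at once; but for $f_{\overline 1}=1$ the relevant gap condition $\mathbf{\overline{(ii)}}$ now concerns even parts and imposes nothing on $f_1$, so I would instead invoke the congruence condition $\mathbf{\overline{(iii)}}$ at $l=1$, namely $f_1+f_{\overline 1}\equiv 0\pmod d$. With $f_{\overline 1}=1$ this forces $f_1\equiv d-1\pmod d$, whose least value $d-1$ already exceeds the bound $f_1\le d-2$; so no admissible overpartition exists and $\overline{U}_{dk+e,0}(x)=0$. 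I expect no genuine computational difficulty beyond keeping straight which defining condition supplies the lower bound on $f_1$ in each of the two families.
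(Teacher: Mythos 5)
Your proposal is correct and follows essentially the same route as the paper: setting $x=0$ isolates the empty overpartition (which satisfies all of {\bf (i)}--{\bf (iv)} and $\mathbf{\overline{(i)}}$--$\mathbf{\overline{(iv)}}$ vacuously), and for the vanishing subscript one rules out $f_{\overline{1}}=0$ directly, then derives the contradiction from {\bf (ii)} in the unbarred case and from the congruence $\mathbf{\overline{(iii)}}$ in the barred case. Your phrasing of the barred contradiction ($f_1\equiv d-1\pmod d$ forcing $f_1\ge d-1>d-2$) is equivalent to the paper's (which concludes $f_1$ would have to be negative), and you even correctly flag the asymmetry between the two families that the paper also navigates.
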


\begin{proof}
  When we plug in $x = 0$, 
  the generating functions in Definition \ref{defMain} become 
  \begin{align}
  \nonumber 
    & U_{dk+e, da+f}(0) = \sum_{n \geq 0} u_{dk+e, da+f}(n, 0) q^n, \\
  \nonumber 
    & \textrm{and }
    \overline{U}_{dk+e, da+f}(0) = \sum_{n \geq 0} \overline{u}_{dk+e, da+f}(n, 0) q^n.  
  \end{align}
  Now, 
  \begin{align}
  \nonumber 
    u_{dk+e, da+f}(n, 0)
    = \overline{u}_{dk+e, da+f}(n, 0) 
    = \begin{cases}
      1 & \textrm{ if } n = 0, \\ 
      0 & \textrm{ if } n > 0, 
      \end{cases}
  \end{align}
  since the only partition having zero number of parts is the empty partition of zero.  
  Please observe that conditions {\bf {(i)}}-{\bf {(iv)}}, and 
  $\mathbf{\overline{(i)}}$-$\mathbf{\overline{(iv)}}$ in Definition \ref{defMain} hold when 
  $f_l = f_{\overline{l}} = 0$ for all $l \geq 0$.  
  This proves the first line of identities.  
  
  When $da+f = 0$, {\bf ({i})} in Definition \ref{defMain} becomes 
  $f_1 \leq -1 + (d-1)f_{\overline{1}}$.  
  For $f_{\overline{1}} = 0$, this reduces to $f_1 \leq -1$, 
  which is impossible.  
  For $f_{\overline{1}} = 1$, $f_1 \leq d-2$ contradicts 
  {\bf ({ii})} in Definition \ref{defMain}, 
  i.e. $f_1 \leq d-1$.  
  In either case, there cannot be any such partions, 
  so that $u_{dk+e, 0}(m, n) = 0$ for any $m$ and $n$.  
  
  $da+f = 0$ $\mathbf{\overline{(i)}}$ in Definition \ref{defMain} yields 
  $f_1 \leq -1 + (d-1)f_{\overline{1}}$ again.  
  The $f_{\overline{1}} = 0$ possibility reduces the inequality to $f_1 \leq -1$ 
  as in the previous paragraph, and this is similarly impossible.  
  When $f_{\overline{1}} = 1$, $f_1 + f_{\overline{1}} \leq d-1$.  
  This case, together with $\mathbf{\overline{(iii)}}$ in Definition \ref{defMain} 
  requires that $f_1$ $=-1$, $-d-1$, $-2d-1$, \ldots, 
  since $f_1 + f_{\overline{1}} \equiv \pmod{d}$ and $f_{\overline{1}} = 1$.  
  In either case, $f_1$ is forced to be negative; 
  therefore, $\overline{u}_{dk+e, 0}(m, n) = 0$ for any $m$ and $n$.  
  
  The last two paragraphs establish the second line of identities, 
  and conclude the proof.  
\end{proof}

Next, we claim and verify a set of solutions 
to the functional equations and initial conditions 
given by Lemmas \ref{lemmaUbarAdjustment}-\ref{lemmaUUbarInitVals}.  
These in turn will prove Theorem \ref{thmOverPtnModd}, that is the main result.  

Let's set forth the following $q$-hypergeometric terms, 
using the parameters described in \eqref{defParams}, except for $a$. 
\begin{align}
\nonumber 
  \alpha^f_n(x) & = (-1)^n x^{(dk+e)n} (xq^{n+1})^{f-e} q^{2(dk+e) \binom{n+1}{2}} 
   \frac{ ( (xq)^d; q^{2d})_\infty ( -q^d; q^d)_n ( -(xq^{n+1})^d; q^d)_\infty }
    { ( xq; q^2)_\infty ( q^d; q^d)_n ( (xq^{n+1})^d; q^d)_\infty  } \\ 
\label{defAlphaf}
  & \times \begin{cases}
    \left[ \frac{ ( (xq)^{e-f} - (xq)^d ) }{ ( 1 - (xq)^d ) } 
      + q^{nd} \frac{ ( (xq)^d - (xq)^{d+e-f} ) }{ ( 1 - (xq)^d ) } \right] , 
      & \textrm{ if } f < e, \\ 
    1 , & \textrm{ if } f = e, \\ 
    \left[ \frac{ ( 1 - (xq)^{d+e-f} ) }{ ( 1 - (xq)^d ) } 
      + q^{-nd} \frac{ ( (xq)^{e-f} - 1 ) }{ ( 1 - (xq)^d ) } \right] , 
      & \textrm{ if } f > e.  
  \end{cases}
\end{align}
\begin{align}
\nonumber 
  \beta^f_n(x) & = - (-1)^n x^{(dk+e)n} q^{(e-f)n} q^{2(dk+e) \binom{n+1}{2}} 
   \frac{ ( (xq)^d; q^{2d})_\infty ( -q^d; q^d)_n ( -(xq^{n+1})^d; q^d)_\infty }
    { ( xq; q^2)_\infty ( q^d; q^d)_n ( (xq^{n+1})^d; q^d)_\infty  } \\ 
\label{defBetaf}
  & \times \begin{cases}
    \left[ \frac{ ( (xq)^{e-f} - (xq)^d ) }{ ( 1 - (xq)^d ) } 
      + q^{-nd} \frac{ ( 1 - (xq)^{e-f} ) }{ ( 1 - (xq)^d ) } \right] , 
      & \textrm{ if } f < e, \\ 
    1 , & \textrm{ if } f = e, \\ 
    \left[ \frac{ ( 1 - (xq)^{d+e-f} ) }{ ( 1 - (xq)^d ) } 
      + q^{nd} \frac{ ( (xq)^{d+e-f} - (xq)^d ) }{ ( 1 - (xq)^d ) } \right] , 
      & \textrm{ if } f > e.  
  \end{cases}
\end{align}
\begin{align}
\label{defAlphaBetaBar}
  \overline{\alpha}_n(x) = - \overline{\beta}_n(x) 
  = (-1)^n x^{(dk+e)n} q^{ 2(dk+e)\binom{n+1}{2} }
   \frac{ ( (xq^2)^d; q^{2d})_\infty ( -q^d; q^d)_n ( -(xq^{n+1})^d; q^d)_\infty }
    { ( xq^2; q^2)_\infty ( q^d; q^d)_n ( (xq^{n+1})^d; q^d)_\infty  }.  
\end{align}
\noindent
{\bf Remark: } It is easy to verify that if we use either of the $f \neq e$ cases 
for $\alpha^e_n(x)$ and $\beta^e_n(x)$, we arrive at the $f = e$ case.  
Also, using the $f<e$ case for $\alpha^0_n(x)$ and $\beta^0_n(x)$, respectively,
and the $f > e$ case for $\alpha^d_n(x)$ and $\beta^d_n(x)$, respectively,
yield the same $q$-hypergeometric term.  
As indicated above, the role of $f$ is that of a residue class, 
and $\alpha^f_n(x)$ and $\beta^f_n(x)$ behave nicely with the extremes.  

\begin{lemma}
\label{lemmaUUbarClaimedSolns}
  With parameters as described in \eqref{defParams}, for $e = d$ or $2e = d$, 
  and the $q$-hypergeometric terms as defined in \eqref{defAlphaf}-\eqref{defAlphaBetaBar}, 
  \begin{align}
  \nonumber 
    U_{dk+e, da+f}(x) 
    & = \sum_{n \geq 0} \alpha^f_n(x) q^{-n(da+f)} 
      + \beta^f_n(x) (xq^{n+1})^{da+f}, \\ 
  \nonumber 
    \overline{U}_{dk+e, da+f}(x) 
    & = \sum_{n \geq 0} \overline{\alpha}_n(x) q^{-n(da+d)} 
      + \overline{\beta}_n(x) (xq^{n+1})^{da+d}.  
  \end{align}
\end{lemma}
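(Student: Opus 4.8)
The plan is to treat this lemma as a verification backed by a uniqueness principle, so that I never have to produce the closed forms from scratch but only confirm that the proposed $\alpha^f_n$, $\beta^f_n$, $\overline{\alpha}_n$, $\overline{\beta}_n$ series satisfy every relation already in hand. First I would observe that the recurrences of Lemma~\ref{lemmaUUbarFunclEqns}, together with the adjustment of Lemma~\ref{lemmaUbarAdjustment} and the initial data of Lemma~\ref{lemmaUUbarInitVals}, pin down both families uniquely as power series in $x$ with coefficients formal Laurent series in $q$. Telescoping each functional equation in its moving index down to the vanishing base cases $U_{dk+e,0}(x)=\overline{U}_{dk+e,0}(x)=0$ expresses every member of one family through members of the other evaluated at $xq$. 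Most resulting summands carry a strictly positive power of $x$ and hence feed only on coefficients of lower $x$-degree; the isolated exponent-zero terms (those with $(xq)^{da+f-1}$ or $(xq)^{da}$ of exponent $0$, i.e. $a=0$) couple the degree-$m$ coefficients of the two families at equal degree. Composing the two equations returns the coefficient of $x^m$ to itself with a factor $q^{2m}$, so a relation of the shape $(1-q^{2m})[x^m](\cdots)=(\text{strictly lower-degree data})$ results; since $1-q^{2m}$ is invertible for $m\geq 1$ and the degree-zero coefficients are fixed by $U(0)=\overline{U}(0)=1$, all coefficients are determined inductively. This is exactly the defining $q$-difference equation principle of~\cite{Andrews-parity, KK-AndrewsStyle}, so it suffices to check the proposed series against each relation.

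The adjustment relation of Lemma~\ref{lemmaUbarAdjustment} is immediate for the proposed form, because the series $\sum_{n\geq 0}\overline{\alpha}_n(x)q^{-n(da+d)}+\overline{\beta}_n(x)(xq^{n+1})^{da+d}$ carries no dependence on $f$ at all; this is the structural reason \eqref{defAlphaBetaBar} was written with $da+d$ rather than $da+f$. For the initial conditions I would set $x=0$: the factor $x^{(dk+e)n}$ kills every $n\geq 1$ term, the summand $\beta^f_n(x)(xq^{n+1})^{da+f}$ vanishes once $da+f\geq 1$, and one must confirm that the surviving $n=0$ contribution equals $1$. Here the three-case bracket in \eqref{defAlphaf} earns its keep: the apparent zero or pole of $(xq)^{f-e}$ as $x\to 0$ is cancelled by the bracket, which is precisely the extremal behavior recorded in the Remark following \eqref{defAlphaBetaBar}; I would note this cancellation and then treat the boundary value $da+f=0$, where the construction degenerates to the empty sum and returns $0$, recovering Lemma~\ref{lemmaUUbarInitVals}.

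The substance is the term-by-term verification of the two functional equations. For the first I would form the difference $U_{dk+e,da+f}(x)-U_{dk+e,da+f-1}(x)$ summand by summand. Since $\alpha^f_n$ and $\alpha^{f-1}_n$ (and likewise the $\beta$'s) differ only through the prefactor $(xq^{n+1})^{f-e}$, the weight $q^{-n(da+f)}$, and the bracket, this difference collapses to a single clean $q$-hypergeometric term per $n$, which I would match against $(xq)^{da+f-1}\overline{U}_{dk+e,d(k-a)+d}(xq)+(xq)^{da+d+f-1}\overline{U}_{dk+e,d(k-a-1)+d}(xq)$ after substituting $x\mapsto xq$ into $\overline{\alpha}_n,\overline{\beta}_n$. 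Crucially, that substitution turns the base $xq^2$ of \eqref{defAlphaBetaBar} into the base of \eqref{defAlphaf} read at $xq$, so the infinite products align; the matching then rests on the Pochhammer shifts $(-q^d;q^d)_{n+1}=(1+q^{d(n+1)})(-q^d;q^d)_n$ and $(q^d;q^d)_{n+1}=(1-q^{d(n+1)})(q^d;q^d)_n$, on peeling the $n$-dependent factors $1\pm(xq^{n+1})^d$ off the tail products under $n\mapsto n+1$, and on comparing the quadratic exponents $2(dk+e)\binom{n+1}{2}$. The reindexing $n\mapsto n+1$ is what turns the $d(k-a-1)+d$ term into the shifted tail of the $d(k-a)+d$ term. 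I would prove the $f=e$ case first, where both brackets equal $1$, to fix the core identity, and then extend to $f<e$ and $f>e$ using the residue-class behavior of $f$ and the extremes $f=0,d$ flagged in the Remark. The second functional equation is handled identically with the two families interchanged, and the standing hypothesis $e=d$ or $2e=d$ must be kept in view since it is what lets the shifted base-$q$ and base-$q^2$ products realign; I would also check at the outset that the right-hand indices $d(k-a)+d$, $d(k-a-1)+d$, and $d(k-a-2)+d$ remain admissible, with the boundary values falling back on $\overline{U}_{\cdot,0}=0$.

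The hard part will be exactly this last bookkeeping: the index shifts in $n$, the simultaneous splitting into $f<e$, $f=e$, $f>e$, and the careful tracking of the spurious factors $(xq)^{f-e}$ against the case brackets so that no false poles or zeros are introduced, all have to be reconciled at once while the products are steered into the form demanded by the right-hand sides. Once both functional equations, the adjustment relation, and the initial conditions are confirmed, the uniqueness established in the first step forces the proposed series to equal $U_{dk+e,da+f}(x)$ and $\overline{U}_{dk+e,da+f}(x)$, which completes the lemma; specializing to $x=1$ and collapsing the resulting $q$-products into triple products then yields Theorem~\ref{thmOverPtnModd}.
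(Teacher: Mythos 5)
Your overall strategy is the paper's own: invoke the defining $q$-difference-equations principle for uniqueness, then verify that the series built from \eqref{defAlphaf}--\eqref{defAlphaBetaBar} satisfy Lemmas \ref{lemmaUbarAdjustment}, \ref{lemmaUUbarFunclEqns}, and \ref{lemmaUUbarInitVals}. Most of your verification plan matches the paper (the $f$-independence of the $\overline{U}$ series disposing of Lemma \ref{lemmaUbarAdjustment}, the $x=0$ evaluation with the bracket cancelling $(xq)^{f-e}$ at $n=0$, the termwise matching after reindexing). But there is a genuine gap in your treatment of the second initial condition, $U_{dk+e,0}(x)=\overline{U}_{dk+e,0}(x)=0$. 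When $da+f=0$ the claimed series does \emph{not} ``degenerate to the empty sum'': the factors $q^{-n(da+f)}$ and $(xq^{n+1})^{da+f}$ simply collapse to $1$, leaving
\begin{align}
\nonumber
\sum_{n\geq 0}\bigl(\alpha^0_n(x)+\beta^0_n(x)\bigr)
\qquad\text{and}\qquad
\sum_{n\geq 0}\bigl(\overline{\alpha}_n(x)+\overline{\beta}_n(x)\bigr).
\end{align}
The second sum vanishes termwise because $\overline{\alpha}_n(x)=-\overline{\beta}_n(x)$ holds by definition \eqref{defAlphaBetaBar}, but the first requires the nontrivial termwise identity $\alpha^0_n(x)+\beta^0_n(x)=0$, and this is precisely --- and \emph{only} --- where the hypothesis $e=d$ or $2e=d$ enters the paper's proof: writing out the $f<e$ case of \eqref{defAlphaf}--\eqref{defBetaf} at $f=0$, the two brackets cancel exactly when the exponents $d-e$ and $e$ coincide ($2e=d$) or the offending terms collapse ($e=d$).

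Because you dismissed this case as trivially zero, you were forced to account for the hypothesis elsewhere, and you misattributed it to the functional-equation verification (``what lets the shifted base-$q$ and base-$q^2$ products realign''). That attribution is incorrect: under $x\mapsto xq$ the products in \eqref{defAlphaBetaBar} differ from those in \eqref{defAlphaf}--\eqref{defBetaf} only by finite factors of the type $(1-(xq)^d)/(1-xq)$, for \emph{every} $1\leq e\leq d$, and the termwise identities behind Lemma \ref{lemmaUUbarFunclEqns} hold with no restriction on $e$. As written, your argument would therefore ``prove'' the lemma for all $e$, which is false --- the paper notes in Section \ref{secComm} that the construction genuinely breaks down outside $e=d$ and $2e=d$. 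The repair is concrete: replace the empty-sum claim by a direct verification of $\alpha^0_n(x)+\beta^0_n(x)=0$ under the stated hypothesis (using the Remark's identification of the $f=0$ and $f=d$ extremes), after which the rest of your plan goes through.
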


\begin{proof}
  Thanks to the defining $q$-difference equations principle~\cite{Andrews-parity}
  Lemmas \ref{lemmaUbarAdjustment}-\ref{lemmaUUbarInitVals} 
  uniquely determine the power series for $U_{dk+e, da+f}(x)$ 
  and $\overline{U}_{dk+e, da+f}(x)$.  
  Thus, all we need to do is to verify that the claimed solutions in the statement 
  satisfy the functional equations in Lemma \ref{lemmaUUbarFunclEqns} 
  and the initial conditions in Lemma \ref{lemmaUUbarInitVals}.  
  
  The identities in Lemma \ref{lemmaUbarAdjustment} are satisfied by definition.  
  
  The computational verification of each of the functional equations in 
  Lemma \ref{lemmaUUbarFunclEqns} is similar to each other.  
  We sketch one below.  
  Following the remark after the definitions \eqref{defAlphaf}-\eqref{defAlphaBetaBar}, 
  for $1 \leq f \leq e$, 
  \begin{align}
  \nonumber 
    & U_{dk+e, da+f}(x) - U_{dk+e, da+f-1}(x) \\
  \nonumber 
      & (xq)^{da+f-1} \overline{U}_{dk+e, d(k-a)+d}(xq)
      + (xq)^{da+d+f-1} \overline{U}_{dk+e, d(k-a-1)+d}(xq), 
  \end{align}
  we replace the functions with the as yet alleged solutions.  
  \begin{align}
  \nonumber 
    & \sum_{n \geq 0} \alpha^f_n(x) q^{-n(da+f)} 
      + \beta^f_n(x) (xq^{n+1})^{da+f} \\
  \nonumber 
    & - \sum_{n \geq 0} \alpha^{f-1}_n(x) q^{-n(da+f-1)} 
      + \beta^{f-1}_n(x) (xq^{n+1})^{da+f-1} \\
  \nonumber 
      & \stackrel{?}{=} (xq)^{da+f-1} 
      \sum_{n \geq 0} \overline{\alpha}_n(xq) q^{-n(d(k-a)+d)} 
      + \overline{\beta}_n(xq) (xq^{n+2})^{d(k-a)+d} \\
  \nonumber 
      & + (xq)^{da+d+f-1} 
      \sum_{n \geq 0} \overline{\alpha}_n(xq) q^{-n(d(k-a-1)+d)} 
      + \overline{\beta}_n(xq) (xq^{n+2})^{d(k-a-1)+d}.  
  \end{align}
  Because of the $(q^d; q^d)_n$ in the denominator, 
  $\alpha^f_n(x)$ $= \beta^f_n(x)$ $=\overline{\alpha}_n(x)$ 
  $=\overline{\beta}_n(x)$ $=0$ for $n < 0$~\cite{GR}.  
  So we can shift the index for the second type of terms 
  inside the sums on the right hand side 
  without changing the domain for the summation index $n$.  
  We also regroup the terms on both sides.  
  \begin{align}
  \nonumber 
    & \sum_{n \geq 0} 
      \left( \alpha^f_n(x) q^{-n(da+f)} 
        - \alpha^{f-1}_n(x) q^{-n(da+f-1)} \right) \\ 
  \nonumber 
    & + \sum_{n \geq 0} 
      \left( \beta^f_n(x) (xq^{n+1})^{da+f} 
        - \beta^{f-1}_n(x) (xq^{n+1})^{da+f-1} \right) \\ 
  \nonumber 
      & \stackrel{?}{=} \sum_{n \geq 0} 
      \left( (xq)^{da+f-1} q^{-n(d(k-a)+d)} 
        + (xq)^{da+d+f-1} q^{-n(d(k-a-1)+d)} \right) \overline{\alpha}_n(xq) \\
  \nonumber 
      & + \sum_{n \geq 0} 
      \left( (xq)^{da+f-1} (xq^{n+1})^{d(k-a)+d} 
        + (xq)^{da+d+f-1} (xq^{n+1})^{d(k-a-1)+d} \right) \overline{\beta}_{n-1}(xq).  
  \end{align}
  Then, we verify the sufficient (but clearly not necessary) 
  \begin{align}
  \nonumber 
    & \left( \alpha^f_n(x) q^{-n(da+f)} 
        - \alpha^{f-1}_n(x) q^{-n(da+f-1)} \right) \\ 
  \nonumber 
      & = \left( (xq)^{da+f-1} (xq^{n+1})^{d(k-a)+d} 
        + (xq)^{da+d+f-1} (xq^{n+1})^{d(k-a-1)+d} \right) \overline{\beta}_{n-1}(xq), 
  \end{align}
  and 
  \begin{align}
  \nonumber 
    & \left( \beta^f_n(x) (xq^{n+1})^{da+f} 
        - \beta^{f-1}_n(x) (xq^{n+1})^{da+f-1} \right) \\ 
  \nonumber 
      & = \left( (xq)^{da+f-1} q^{-n(d(k-a)+d)} 
        + (xq)^{da+d+f-1} q^{-n(d(k-a-1)+d)} \right) \overline{\alpha}_n(xq).  
  \end{align}
  for each $n \geq 0$.  
  All are straightforward elementary calculations.  
  These show that the claimed solutions satisfy 
  the functional equations in Lemma \ref{lemmaUUbarFunclEqns}.  
  
  The first line of identities in Lemma \ref{lemmaUUbarInitVals}
  are also easily seen to hold.  
  For the second line of identities, 
  we verify that 
  \begin{align}
  \nonumber 
    \alpha^0_n(x) + \beta^0_n(x) 
    = \overline{\alpha}_n(x) + \overline{\beta}_n(x) 
    = 0.  
  \end{align}
  The only place where we need $e = d$ or $2e = d$ is the vanishing of the first sum.  
  This wraps up the proof.  
\end{proof}

\begin{proof}[proof of Theorem \ref{thmOverPtnModd}]
  We present the proof of only one case, the others being completely analogous.  
  Plugging in $x = 1$ in Lemma \ref{lemmaUUbarClaimedSolns} for $f < e$, 
  combining some finite and infinite products, 
  and factoring out whichever does not depend on the summation index, 
  we have 
  \begin{align}
  \nonumber 
    & U_{dk+e, da+f}(1) = \frac{ (q^{d}; q^{2d})_\infty ( -q^{d}; q^{d})_\infty }
      { ( q; q^2)_\infty ( q^d; q^d )_\infty } \\
  \nonumber 
    & \times \left( \sum_{n \geq 0} (-1)^n  q^{ 2(dk+e) \binom{n+1}{2} } 
      q^{ -n(da+e) } \left\{ \frac{ (1 - q^{d+f-e}) }{ (1 - q^d) }   
    - q^{ -n(-d) } \frac{ (q^{d+f-e} - q^d) }{ (1 - q^d) } \right\} \right. \\
  \nonumber 
    & \left. - (-1)^n  q^{ 2(dk+e) \binom{n+1}{2} } 
      q^{ (n+1)(da+e) } \left\{ \frac{ (1 - q^{d+f-e}) }{ (1 - q^d) }   
    - q^{ (n+1)(-d) } \frac{ (q^{d+f-e} - q^d) }{ (1 - q^d) } \right\} \right)
  \end{align}
  \begin{align}
  \nonumber 
    & = \frac{ 1 }
      { ( q; q^2)_\infty ( q^d; q^d )_\infty } 
    \left\{ \frac{ (1 - q^{d+f-e}) }{ (1 - q^d) } 
      \left( \sum_{n = -\infty}^\infty (-1)^n q^{ 2(dk+e)\binom{n+1}{2} } 
        q^{n(da+e)} \right) \right. \\ 
  \nonumber 
    & \left. + \frac{ (q^{d+f-e} - q^d) }{ (1 - q^d) } 
      \left( \sum_{n = -\infty}^\infty (-1)^n q^{ 2(dk+e)\binom{n+1}{2} } 
        q^{n(da-d+e)} \right) \right\}.  
  \end{align}
  $(q^{d}; q^{2d})_\infty ( -q^{d}; q^{d})_\infty = 1$ 
  is Euler's partition identity~\cite{TheBlueBook} after $q \to q^d$.  
  Applying Jacobi's triple product identity~\cite{GR} on each of the bilateral sums 
  inside the last pair of set braces finishes the proof.  
\end{proof}

\section{The Construction of Solutions to Functional Equations at a Glance}
\label{secConstr} 

The idea is the same as in~\cite{KK-AndrewsStyle}.  
The motivation and the notation comes from Andrews' 
papers~\cite{Andrews-RRG-analytic, Andrews-posets-RRG}.  
With parameters as in \eqref{defParams}, 
we first assume that the generating functions have the following form.  
\begin{align}
\nonumber 
  U_{dk+e, da+f}(x) 
  = \sum_{ n \geq 0 } \alpha_n(x) x^{(da+f)B} q^{(da+f)C} q^{n(da+f)D}
  + \beta_n(x) x^{(da+f)E} q^{(da+f)F} q^{n(da+f)G}, 
\end{align}
and
\begin{align}
\nonumber 
  \overline{U}_{dk+e, da+d}(x) 
  = \sum_{ n \geq 0 } \overline{\alpha}_n(x) 
    x^{(da+d)\overline{B}} q^{(da+d)\overline{C}} q^{n(da+d)\overline{D}}
  + \overline{\beta}_n(x) 
    x^{(da+d)\overline{E}} q^{(da+d)\overline{F}} q^{n(da+d)\overline{G}}, 
\end{align}
for $q$-hypergeometric terms $\alpha_n(x)$, $\beta_n(x)$, 
$\overline{\alpha}_n(x)$, $\overline{\beta}_n(x)$, 
and integers $B$, $C$, $D$, $E$, $F$, $G$, 
$\overline{B}$, $\overline{C}$, $\overline{D}$, 
$\overline{E}$, $\overline{F}$, and $\overline{G}$ to be determined.  
It is important to have the $q$-hypergeometric terms 
($\alpha_n(x)$, $\beta_n(x)$, $\overline{\alpha}_n(x)$, $\overline{\beta}_n(x)$) 
to be independent of the second index $(da+f)$.  

To construct the $q$-hypergeometric terms 
$\alpha_n(x)$, $\beta_n(x)$, $\overline{\alpha}_n(x)$, $\overline{\beta}_n(x)$, 
and to determine at least some of the unknown integer exponents, 
we use the functional equations in Lemma \ref{lemmaUUbarFunclEqns}.  
Here, we imitate the framework of proofs in~\cite{Andrews-RRG-analytic}.  
As an example, we write the functional equation 
\begin{align}
\nonumber 
  \overline{U}_{dk+e, da+d}(x) - \overline{U}_{dk+e, d(a-1)+d}(x) 
  = (xq)^{da} U_{dk+e, d(k-a)+e}(xq)
    + (xq)^{da+d} U_{dk+e, d(k-a-1)+e}(xq)
\end{align}
as 
\begin{align}
\nonumber 
  & \sum_{ n \geq 0 } \overline{\alpha}_n(x) 
    x^{(da+d)\overline{B}} q^{(da+d)\overline{C}} q^{n(da+d)\overline{D}}
  + \overline{\beta}_n(x) 
    x^{(da+d)\overline{E}} q^{(da+d)\overline{F}} q^{n(da+d)\overline{G}} \\ 
\nonumber 
  & - \left( \sum_{ n \geq 0 } \overline{\alpha}_n(x) 
    x^{(d(a-1)+d)\overline{B}} q^{(d(a-1)+d)\overline{C}} q^{n(d(a-1)+d)\overline{D}}
  + \overline{\beta}_n(x) 
    x^{(d(a-1)+d)\overline{E}} q^{(d(a-1)+d)\overline{F}} q^{n(d(a-1)+d)\overline{G}} \right)
\end{align}
\begin{align}
\nonumber 
  & = (xq)^{da} \Bigg( \sum_{ n \geq 0 } \alpha_n(xq) 
    (xq)^{(d(k-a)+e)B} q^{(d(k-a)+e)C} q^{n(d(k-a)+e)D} \\ 
\nonumber 
  & \qquad + \beta_n(xq) 
    (xq)^{(d(k-a)+e)E} q^{(d(k-a)+e)F} q^{n(d(k-a)+e)G} \Bigg) \\
\nonumber 
  & + (xq)^{da+d} \Bigg( \sum_{ n \geq 0 } \alpha_n(xq) 
    (xq)^{(d(k-a-1)+e)B} q^{(d(k-a-1)+e)C} q^{n(d(k-a-1)+e)D} \\
\nonumber 
   & \qquad + \beta_n(xq) 
    (xq)^{(d(k-a-1)+e)E} q^{(d(k-a-1)+e)F} q^{n(d(k-a-1)+e)G} \Bigg).  
\end{align}
Then, we rearrange the terms, and shift the index in one of the sums on the left as 
\begin{align}
\nonumber 
  & \sum_{n \geq 0} \Bigg( \overline{\alpha}_n(x) 
    x^{(da+d)\overline{B}} q^{(da+d)\overline{C}} q^{n(da+d)\overline{D}} 
  - \overline{\alpha}_n(x) x^{(d(a-1)+d)\overline{B}} q^{(d(a-1)+d)\overline{C}} 
    q^{n(d(a-1)+d)\overline{D}} \Bigg) \\ 
\nonumber 
  & + \sum_{n \geq 0} \Bigg( \overline{\beta}_n(x) 
    x^{(da+d)\overline{E}} q^{(da+d)\overline{F}} q^{n(da+d)\overline{G}} 
  - \overline{\beta}_n(x) x^{(d(a-1)+d)\overline{E}} q^{(d(a-1)+d)\overline{F}} 
    q^{n(d(a-1)+d)\overline{G}} \Bigg)
\end{align}
\begin{align}
\nonumber 
  & = \sum_{n \geq 0} \Bigg( (xq)^{da} \alpha_n(xq) 
    (xq)^{(d(k-a)+e)B} q^{(d(k-a)+e)C} q^{n(d(k-a)+e)D} \\ 
\nonumber 
  & \qquad + (xq)^{da+d} \alpha_n(xq) 
    (xq)^{(d(k-a-1)+e)B} q^{(d(k-a-1)+e)C} q^{n(d(k-a-1)+e)D} \Bigg) \\ 
\nonumber 
  & + \sum_{n \geq 1} \Bigg( (xq)^{da} \beta_{n-1}(xq) 
    (xq)^{(d(k-a)+e)B} q^{(d(k-a)+e)C} q^{(n-1)(d(k-a)+e)D} \\ 
\nonumber 
  & \qquad + (xq)^{da+d} \beta_{n-1}(xq) 
    (xq)^{(d(k-a-1)+e)B} q^{(d(k-a-1)+e)C} q^{(n-1)(d(k-a-1)+e)D} \Bigg).  
\end{align}
Finally, we identify 
\begin{align}
\nonumber 
  & \overline{\alpha}_n(x) 
    x^{(da+d)\overline{B}} q^{(da+d)\overline{C}} q^{n(da+d)\overline{D}} 
  - \overline{\alpha}_n(x) x^{(d(a-1)+d)\overline{B}} q^{(d(a-1)+d)\overline{C}} 
    q^{n(d(a-1)+d)\overline{D}} \\
\nonumber 
  & = (xq)^{da} \beta_{n-1}(xq) 
    (xq)^{(d(k-a)+e)B} q^{(d(k-a)+e)C} q^{(n-1)(d(k-a)+e)D} \\ 
\nonumber 
  & \qquad + (xq)^{da+d} \beta_{n-1}(xq) 
    (xq)^{(d(k-a-1)+e)B} q^{(d(k-a-1)+e)C} q^{(n-1)(d(k-a-1)+e)D}
\end{align}
for each $n \geq 1$, and 
\begin{align}
\nonumber 
  & \overline{\beta}_n(x) 
    x^{(da+d)\overline{E}} q^{(da+d)\overline{F}} q^{n(da+d)\overline{G}} 
  - \overline{\beta}_n(x) x^{(d(a-1)+d)\overline{E}} q^{(d(a-1)+d)\overline{F}} 
    q^{n(d(a-1)+d)\overline{G}} \\ 
\nonumber 
  & = (xq)^{da} \alpha_n(xq) 
    (xq)^{(d(k-a)+e)B} q^{(d(k-a)+e)C} q^{n(d(k-a)+e)D} \\ 
\nonumber 
  & \qquad + (xq)^{da+d} \alpha_n(xq) 
    (xq)^{(d(k-a-1)+e)B} q^{(d(k-a-1)+e)C} q^{n(d(k-a-1)+e)D} 
\end{align}
for each $n \geq 0$.  
Unfortunately, this yields inconsistent equations.  

So, as in~\cite{KK-AndrewsStyle}, 
we relax the independence on $f$ of $\alpha_n(x)$ and of $\beta_n(x)$.  
We still insist on independence on $a$ of those $q$-hypergeometric terms.  
Thus, the final pair of identifications rather look like 
\begin{align}
\nonumber 
  & \overline{\alpha}_n(x) 
    x^{(da+d)\overline{B}} q^{(da+d)\overline{C}} q^{n(da+d)\overline{D}} 
  - \overline{\alpha}_n(x) x^{(d(a-1)+d)\overline{B}} q^{(d(a-1)+d)\overline{C}} 
    q^{n(d(a-1)+d)\overline{D}} \\
\nonumber 
  & = (xq)^{da} \beta^e_{n-1}(xq) 
    (xq)^{(d(k-a)+e)E_e} q^{(d(k-a)+e)F_e} q^{(n-1)(d(k-a)+e)G_e} \\ 
\nonumber 
  & \qquad + (xq)^{da+d} \beta^e_{n-1}(xq) 
    (xq)^{(d(k-a-1)+e)E_e} q^{(d(k-a-1)+e)F_e} q^{(n-1)(d(k-a-1)+e)G_e}
\end{align}
and 
\begin{align}
\nonumber 
  & \overline{\beta}_n(x) 
    x^{(da+d)\overline{E}} q^{(da+d)\overline{F}} q^{n(da+d)\overline{G}} 
  - \overline{\beta}_n(x) x^{(d(a-1)+d)\overline{E}} q^{(d(a-1)+d)\overline{F}} 
    q^{n(d(a-1)+d)\overline{G}} \\ 
\nonumber 
  & = (xq)^{da} \alpha^e_n(xq) 
    (xq)^{(d(k-a)+e)B_e} q^{(d(k-a)+e)C_e} q^{n(d(k-a)+e)D_e} \\ 
\nonumber 
  & \qquad + (xq)^{da+d} \alpha^e_n(xq) 
    (xq)^{(d(k-a-1)+e)B_e} q^{(d(k-a-1)+e)C_e} q^{n(d(k-a-1)+e)D_e} 
\end{align}
for the stated $n$'s.  
Some experimentation shows that each of the $q$-hypergeometric terms 
$\alpha^f_n(x)$, $\beta^f_n(x)$, $\overline{\alpha}_n(x)$, and $\overline{\beta}_n(x)$
are of the form 
\begin{align}
\nonumber 
  & x^{\textrm{linear exponent in } n} \cdot q^{\textrm{quadratic exponent in } n} 
  \cdot \textrm{a ratio of various } q \textrm{-Pochhammer symbols} \\
\nonumber 
  & \times  
  \cdot \textrm{ a rational function in  } x \textrm{ and } q
  \cdot \left( \alpha^e_0(xq^{2n(+1)}) 
    \textrm{ or } \overline{\alpha}_0(xq^{2n(+1)}) \right).  
\end{align}
We will further assume that $\alpha^e_0(x)$ and $\overline{\alpha}_0(x)$ 
are analytic with constant term 1.  
The rational function has a double power series in $x$ and $q$ with constant term 1.  
Later in the computations, we will be plugging in $x = 0$, 
therefore $B_f$, $\overline{B}$, $E_f$ and $\overline{E}$ should be non-negative.  
Furthermore, we will want $U_{\cdot, \cdot}(0)$ $=\overline{U}_{\cdot, \cdot}(0)$ $= 1$.  
This implies that either $(B_f, C_f)$ or $(E_f, F_f)$ must be $(0, 0)$.  
Same is true for the pairs $(\overline{B}, \overline{C})$ 
and $(\overline{E}, \overline{F})$.  
We set $(B_f, C_f)$ $=(\overline{B}, \overline{C})$ $=(0,0)$.  
In fact, we can show that this is possible without any loss of generality.  
Our sample identifications now read: 
\begin{align}
\nonumber 
  & \overline{\alpha}_n(x) q^{n(da+d)\overline{D}} 
  - \overline{\alpha}_n(x) q^{n(d(a-1)+d)\overline{D}} \\
\nonumber 
  & = (xq)^{da} \beta^e_{n-1} (xq)^{(d(k-a)+e)E_e} q^{(d(k-a)+e)F_e} 
  q^{(n-1)(d(k-a)+e)G_e} \\ 
\nonumber 
  & + (xq)^{da+d} \beta^e_{n-1} (xq)^{(d(k-a-1)+e)E_e} q^{(d(k-a-1)+e)F_e} 
  q^{(n-1)(d(k-a-1)+e)G_e}, 
\end{align}
and 
\begin{align}
\nonumber 
  & \overline{\beta}_n(x) 
    x^{(da+d)\overline{E}} q^{(da+d)\overline{F}} q^{n(da+d)\overline{G}} 
  - \overline{\beta}_n(x) x^{(d(a-1)+d)\overline{E}} q^{(d(a-1)+d)\overline{F}} 
    q^{n(d(a-1)+d)\overline{G}} \\ 
\nonumber 
  & = (xq)^{da} \alpha^e_n(xq) q^{n(d(k-a)+e)D_e} 
  + (xq)^{da+d} \alpha^e_n(xq) q^{n(d(k-a-1)+e)D_e}.  
\end{align}
If we enforce independence on $a$ of the $q$-hypergeometric terms at this point, 
we will deduce that
\begin{align}
\nonumber 
  -D_f = -\overline{D} 
  = E_f = \overline{E} 
  = F_f = \overline{F} 
  = G_f = \overline{G} 
  = 1, 
\end{align}
which makes the sample identifications
\begin{align}
\nonumber 
  \overline{\alpha}_n(x) q^{nd} - \overline{\alpha}_n(x)
  = \beta^e_{n-1}(xq)q^{(n-1)(dk+e)}
  + (xq)^{d} \beta^e_{n-1}(xq)q^{(n-1)(d(k-1)+e)}, 
\end{align}
and 
\begin{align}
\nonumber 
\nonumber 
  & \overline{\beta}_n(x) x^{d} q^{d} q^{nd} - \overline{\beta}_n(x) 
  = \alpha^e_n(xq) q^{n(dk+e)} 
  + (xq)^{d} \alpha^e_n(xq) q^{n(d(k-1)+e)}.  
\end{align}
This speeds up the computations considerably.  
Bringing in other identifications, as well, we have
\begin{align}
\nonumber 
  & \begin{bmatrix}
    1 & & & -q^n \\
    -q^n & 1 & & \\
    & & \ddots & \\
    & & -q^n & 1
  \end{bmatrix}
  \begin{bmatrix}
    \alpha^1_n(x) \\ \vdots \\ \alpha^d_n(x) 
  \end{bmatrix} \\
\nonumber 
  & = (xq^{n+1})^{dk} (xq)^{-1} q^{-nd} (1 + q^{nd}) 
    \overline{\beta}_{n-1}(xq)
  \begin{bmatrix}
    \uparrow \\ (xq^{n+1})^{d+f} \\ \downarrow \\
    \uparrow \\ (xq^{n+1})^{f} \\ \downarrow 
  \end{bmatrix}
  \begin{array}{cc}
    \Biggr\} & f \leq e \\ 
    & \\
    \Biggr\} & f > e, 
  \end{array}
\end{align}
\begin{align}
\nonumber
  \overline{\beta}_n(x) \left( (xq^{n+1})^d - 1 \right)
  = (q^{-n})^{dk+e} \alpha^e_n(xq) \left( 1 + (xq^{n+1})^d \right), 
\end{align}
\begin{align}
\nonumber 
  & \begin{bmatrix}
    xq^{n+1} & & & -1 \\
    -1 & xq^{n+1} & & \\
    & & \ddots & \\
    & & -1 & xq^{n+1}
  \end{bmatrix}
  \begin{bmatrix}
    \beta^1_n(x) \\ \vdots \\ \beta^d_n(x) 
  \end{bmatrix} \\
\nonumber 
  & = q^{-n(dk+d-1)} \left( 1 + (xq^{n+1})^d \right) 
  \overline{\alpha}_n(xq)
  \begin{bmatrix}
    \uparrow \\ q^{-nf} \\ \downarrow \\
    \uparrow \\ q^{n(d-f)} \\ \downarrow 
  \end{bmatrix}
  \begin{array}{cc}
    \Biggr\} & f \leq e \\ 
    & \\
    \Biggr\} & f > e, 
  \end{array}
\end{align}
\begin{align}
\nonumber
  \overline{\alpha}_n(x) \left( 1 - q^{nd} \right)
  = (xq^{n+1})^{dk+e} \beta^e_{n-1}(xq) \left( 1 + q^{nd} \right).  
\end{align}
The suppressed entries in the matrices are zero.  
Both displayed matrices are invertible, 
and the inverses can be found easily.  
At this point, we gather that the equations are consistent for any $d$.  
After multiplication by the matrix inverses on both sides 
of the first and the third equations, 
it becomes clear that the main line of computations will involve 
$\alpha^e_n(x)$, $\beta^e_n(x)$, $\overline{\alpha}_n(x)$ and $\overline{\beta}_n(x)$.  
$\alpha^f_n(x)$ and $\beta^f_n(x)$'s for $f \neq e$ can be calculated 
using solutions for $\overline{\alpha}_n(x)$ and $\overline{\beta}_n(x)$.  
We then arrive at: 
\begin{align}
\nonumber 
  \alpha^f_n(x) = (-1)^n x^{(dk+e)n} \left( xq^{n+1} \right)^{f-e}
    q^{2(dk+e)\binom{n+1}{2}} 
    \frac{ ( (xq)^{d}; q^{2d} )_\infty ( -q^{d}; q^{d} )_n ( -(xq^{n+1})^{d}; q^{d} )_\infty }
      { ( xq; q^2 )_\infty ( q^{d}; q^{d} )_n ( -(xq^{n+1})^{d}; q^{d} )_\infty }
\end{align}
\begin{align}
\nonumber 
  \times \widetilde{\alpha}^e_0(xq^{2n})
  \begin{cases}
    \left[ \frac{ ( (xq)^{e-f} - (xq)^d ) }{ ( 1 - (xq)^d ) } 
    - q^{nd} \frac{ ( (xq)^d - (xq)^{d+e-f} ) }{ ( 1 - (xq)^d ) }  \right]
      & \textrm{ if } f < e, \\ 
    1 & \textrm{ if } f = e, \\ 
    \left[ \frac{ ( 1 - (xq)^{d+e-f} ) }{ ( 1 - (xq)^d ) } 
    - q^{-nd} \frac{ ( (xq)^{e-f} - 1 ) }{ ( 1 - (xq)^d ) }  \right]
      & \textrm{ if } f > e,
  \end{cases}
\end{align}
\begin{align}
\nonumber 
  \overline{\beta}_n(x) = -(-1)^n x^{(dk+e)n} q^{2(dk+e)\binom{n+1}{2}}
    \frac{ ( (xq^2)^{d}; q^{2d} )_\infty ( -q^{d}; q^{d} )_n ( -(xq^{n+1})^{d}; q^{d} )_\infty }
      { ( xq^2; q^2 )_\infty ( q^{d}; q^{d} )_n ( -(xq^{n+1})^{d}; q^{d} )_\infty }
  \widetilde{\alpha}^e_0(xq^{2n+1}), 
\end{align}
\begin{align}
\nonumber 
  \overline{\alpha}_n(x) = (-1)^n x^{(dk+e)n} q^{2(dk+e)\binom{n+1}{2}}
    \frac{ ( (xq^2)^{d}; q^{2d} )_\infty ( -q^{d}; q^{d} )_n ( -(xq^{n+1})^{d}; q^{d} )_\infty }
      { ( xq^2; q^2 )_\infty ( q^{d}; q^{d} )_n ( -(xq^{n+1})^{d}; q^{d} )_\infty }
  \widetilde{\overline{\alpha}}_0(xq^{2n}), 
\end{align}
\begin{align}
\nonumber 
  \beta^f_n(x) = -(-1)^n x^{(dk+e)n} \left( q^{-n} \right)^{f-e}
    q^{2(dk+e)\binom{n+1}{2}} 
    \frac{ ( (xq)^{d}; q^{2d} )_\infty ( -q^{d}; q^{d} )_n ( -(xq^{n+1})^{d}; q^{d} )_\infty }
      { ( xq; q^2 )_\infty ( q^{d}; q^{d} )_n ( -(xq^{n+1})^{d}; q^{d} )_\infty }
\end{align}
\begin{align}
\nonumber 
  \times \widetilde{\overline{\alpha}}_0(xq^{2n+1})
  \begin{cases}
    \left[ \frac{ ( (xq)^{e-f} - (xq)^d ) }{ ( 1 - (xq)^d ) } 
    - q^{-nd} \frac{ ( 1 - (xq)^{e-f} ) }{ ( 1 - (xq)^d ) }  \right]
      & \textrm{ if } f < e, \\ 
    1 & \textrm{ if } f = e, \\ 
    \left[ \frac{ ( 1 - (xq)^{d+e-f} ) }{ ( 1 - (xq)^d ) } 
    - q^{nd} \frac{ ( (xq)^{d+e-f} - (xq)^d ) }{ ( 1 - (xq)^d ) }  \right]
      & \textrm{ if } f > e.  
  \end{cases}
\end{align}
We made a little twist on the fly, 
and changed $\alpha^e_0(x)$ and $\overline{\alpha}_0(x)$
to $\widetilde{\alpha}^e_0(x)$ and $\widetilde{\overline{\alpha}}_0(x)$, 
respectively.  
This is done to convert some finite products into infinite ones.  
The precise relations could be readily figured out.  

At this point, using these solutions instead of \eqref{defAlphaf}-\eqref{defAlphaBetaBar}
will make Lemmas \ref{lemmaUbarAdjustment} and \ref{lemmaUUbarFunclEqns} work by construction.  
Also, choosing $\widetilde{\alpha}^e_0(x)$ and $\widetilde{\overline{\alpha}}_0(x)$ 
identically 1 will make Lemma \ref{lemmaUUbarInitVals} work.  

\section{Comments and Future Work}
\label{secComm} 

As seen in Section \ref{secConstr}, 
it is mechanical to construct solutions to functional equations 
which are derived from definitions such as Definition \ref{defMain}.  
In other words, the major component in obtaining identities like Theorem \ref{thmOverPtnModd} 
is spotting the partition or overpartition classes.  
In this respect, Sang, Shi and Yee's paper~\cite{SSY-parity-overptn} 
is the primary inspiration for the results presented here.  

Sang, Shi and Yee~\cite{SSY-parity-overptn} 
have evidently positive generating functions, 
or Andrews-Gordon type~\cite{Andrews-PNAS} series,
for their $e = d = 2$ case.  
Even for their complementary case $2e = d = 2$ we do not have evidently positive series here.  
This is a future project.  

Unfortunately, unless $e = d$ or $2e = d$,  
one gets inconsistent functional equations for 
$\widetilde{\alpha}^e_0(x)$ and $\widetilde{\overline{\alpha}}_0(x)$ 
as described in Section \ref{secConstr}, so there are still many missing cases.  
The construction must significantly differ for those cases, 
so it is also left as future work.  

One wonders the possibility of obtaining the main cases (i.e. the $f=e$ cases) of
Theorem \ref{thmOverPtnModd} or Corollary \ref{corollaryOverPtnModdComb} 
as a consequence of Bressoud's all moduli generalization~\cite{Bressoud-RRG-AllModuli} 
of the Rogers-Ramanujan-Gordon identities~\cite{RRG} 
or of Gordon's theorem for overpartitions~\cite{ChenEtAl-RRG-overptn, Lovejoy-RRG-overptn}, 
and then combinatorially obtaining the auxiliary cases (i.e. the $f \neq e$ cases).  

As pointed out in~\cite{KK-AndrewsStyle}, the computations take too long by hand, 
and they have not been even semi-automated yet.  
Computer algebra support for the calculations such as in Section \ref{secConstr} 
will make production of partition or overpartition identities routine, 
since all one has to do is to try various definitions and write 
the corresponding functional equations for the involved generating functions.

\section*{Acknowledgements} 

The results for $d = 2$, along with the missing cases in Theorem \ref{thmSangShiYee}
are part of the second author's PhD dissertation~\cite{MZD-thesis}.  
The construction of series is done in a much more general, 
but computationally heavier, setting there.  

\bibliographystyle{amsplain}

\end{document}